\newtheorem{theorem}{Theorem}[section]
\newtheorem{Lemma}{Lemma}[section]
\newtheorem{Cor}{Corollary}[section]
\theoremstyle{definition}
\newtheorem{example}{Example}[section]
\newtheorem{assumption}{Assumption}[section]
\theoremstyle{remark}
\newtheorem{claim}{Claim}[section]
\newtheorem{remark}{Remark}[section]
\newcommand{\brref}[1]{(\ref{#1})}
\newcommand{\milano}{Dipartimento di Matematica ``F. Enriques"
 \\ Universit\`a degli Studi di Milano \\ Via Saldini 50 \\ 20133
Milano, Italy}
\newcommand{\palermo}{Dipartimento di Matematica e Informatica
 \\ Universit\`a degli Studi di Palermo\\ Via Archirafi 34 \\ 90123
Palermo, Italy}
\newcommand{\Pin}[1]{{\mathbb P}^{#1}}
\newcommand{\rk}[1]{{\rm rk}\,(#1)}
\newcommand{\nbXi}{\mathbf{X}_i}
\newcommand{\Phicols}[2]{[\Phi^k_{h_1,h_2,h_3}]_#1^#2}
\newcommand{\T}{\mathcal{T}}
\newcommand{\tensor}{\otimes}
\newcommand{\bincof}[2]{\left(\begin{array}{c}
#1\\
#2
\end{array} \right)}
\newcommand{\Frank}[1]{F-$\rk{#1}$}
\title[F-rank and core of Grassman 3-tensors]{The Multilinear rank and Core of trifocal Grassmann tensors}
\author[M.Bertolini]{Marina Bertolini}
\email{marina.bertolini@unimi.it}
\address{\milano}
\author[G.M.Besana]{Gian Mario Besana}
\email{gbesana@depaul.edu}
\address{College of Computing and Digital Media \\ DePaul University \\ 243 South Wabash \\ Chicago IL, 60604 USA}
\author[G.Bini]{Gilberto Bini}
\email{gilberto.bini@unipa.it}
\address{\palermo}
\author[C.Turrini]{ Cristina Turrini}
\email{cristina.turrini@unimi.it}
\address{\milano}
\renewcommand*\env@matrix[1][*\c@\maxMatrixCols c]{%
  \hskip -\arraycolsep
  \let\@ifnextchar\new@ifnextchar
  \array{#1}}
\begin{document}
\date{\today}
\keywords{Multilinear Rank, Core of Tensors, Projective Reconstruction in Computer Vision, Multiview Geometry}
\subjclass[2010]{15A69, 14N05}
\begin{abstract}
 Closed formulas for the multilinear rank of trifocal Grassmann tensors are obtained. An alternative process to the standard HOSVD is introduced for the computation of the core of trifocal Grassmann tensors. Both of these results are obtained, under natural genericity conditions, leveraging the canonical form for these tensors, obtained by the same authors in a previous work.  A gallery of explicit examples is also included.
\end{abstract}

\maketitle

\section{Introduction}

Tensors, either as multidimensional arrays of data in applied settings or, more classically, as representations of multilinear applications among vector spaces, have recently attracted renewed attention: see, for instance, \cite{be-car-cat-gi-on}, \cite{LA}. Among the many fascinating and intricate problems in the study of tensors, the calculation of any of the various established notions of their rank marries theoretical interest and practical applications. In particular, the determination of the multilinear rank of a tensor, i.e. the ranks of all its flattening matrices, is part of the process needed to arrive at a {\it core} of a tensor, see Section \ref{hosvdgeneral}. Being able to successfully and efficiently compute a core of large tensors can be a crucial step for concrete applications in image processing, computer engineering, and data management.

The authors have been interested for a while in a class of tensors that arise naturally in computer vision. In the classical case of reconstruction of a three-dimensional static scene from two, three, or four two-dimensional images, these tensors are known as the fundamental matrix, the trifocal tensor, and the quadrifocal tensor, respectively, and have been studied extensively, see for example \cite{oe1, ah-st-th, al-to2, bebitu, Hart-Zi2,hey1,oe2}. In a more general setting, these tensors, called {\it Grassmann} tensors, were introduced by Hartley and Schaffalitzky, \cite{Hart-Schaf}, and were studied by three of the authors in several articles \cite{tubbMagic, tubbISVC08, tubbCHAPTER, tubbLAIA, tubbAMPA, tubbICCV07} as well as by two of the authors and other collaborators, \cite{notub}.

In \cite{BBBT1}, the authors leveraged the possibility of obtaining a canonical form for a general trifocal Grassmann tensor to compute its rank with a closed formula.

In this work we turn our attention to the multilinear rank of trifocal Grassmann tensors and to the related problem of computing their core. Under the same natural genericity assumption used in \cite{BBBT1}, see Assumption \ref{g.a.}, and similarly leveraging the resulting canonical form, in Section \ref{multilin} the multilinear rank of a trifocal Grassmann tensor is computed, with closed formulas as well.

A standard approach for the computation of a core $\mathcal{C}$ of a tensor $\T$ is to utilize the so-called {\it Tucker decomposition} \cite{tu}, often in the form of a higher order singular-value decomposition (HOSVD), \cite{del-dem-van, vann-vand-mee}. The Tucker decomposition combines the singular value decompositions  of all the flattenings $\T_i = U_i \Sigma_i W^*_i$ of the tensor in a multilinear multiplication $\mathcal{C} = (U_1^*, \dots, U^*_i,\dots) \cdot \T$ where $*$ denotes the adjoint matrix. Leveraging once again the canonical form for a trifocal Grassmann tensor, Section \ref{core} shows how to compute a core in a simpler alternative way. Properties of the canonical form of trifocal Grassmann tensors allow for a direct, immediate computation of its core. This canonical core can then appropriately be pulled back to produce a core for the original tensor. As part of this process, singular values of appropriate matrices still need to be computed, but the size of the matrices involved is, in general, significantly smaller than in the standard Tucker decomposition or HOSVD.

Examples of the explicit computation of the multilinear rank and the core are provided in Section \ref{examples}.

\section{Notation and Background Material}
\subsection{Notation}
Throughout this work we assume that the underlying field is the field $\mathbb{C}$ of complex numbers. Given a matrix $A$ with complex entries, $A^*$ denotes its adjoint matrix. For any positive integer $k,$ $I_k$ denotes the $k \times k$ identity matrix. A vector space $V$ of dimension $r$ over $\mathbb{C}$ is sometimes referred to as an $r$-space and $V^*$ denotes its dual, i.e. $V^* =$ Hom$_\mathbb{C}(V, \mathbb{C}).$
\subsection{Preliminaries on tensors} \label{introranks}
Notation and definitions of tensors
and their ranks (rank, multilinear rank or $F$-rank, P-rank) used in this work are relatively
standard in the literature. They are all contained in
\cite{LA} and briefly summarized below.

Given vector spaces $V_i, i = 1,\dots t,$  the {\it rank} of a tensor $\T \in V_1 \tensor V_2\tensor ... \tensor V_t,$ denoted by $R(\T),$ is the minimum number of decomposable tensors needed to write $\T$ as a sum. Recall that $R(\T)$ is invariant under changes of bases in the vector spaces $V_i $ (see, for example, \cite{LA}, Section 2.4 ).

This work focuses on a special class of trilinear tensors. 
For the convenience of the reader, and to fix our notation, it is useful to recall the explicit construction of the flattening matrices of a three dimensional tensor.

Let $V_1,V_2,V_3$ be vector spaces of dimension $n_1,n_2,n_3,$ with chosen bases $\{\alpha_i\}, \{\beta_j\}, \{\gamma_k\},$ respectively.

Let $\mathcal{T}=[T_{i,j,k}]\in V_1 \otimes V_2
\otimes V_3$. Interpreting $V_1 \otimes V_2 \otimes V_3$ as $V_1
\otimes (V_2 \otimes V_3)$, we get
\begin{equation}
\label{TA} \mathcal{T}=\sum_{i}\alpha_i \otimes
(\sum_{j,k}T_{i,j,k}(\beta_j \otimes \gamma_k))
\end{equation}
and the corresponding matrix, of size $n_1 \times (n_2 n_3)$, which
is the flattening $\mathcal{T}_1$, and has the following block
structure:

\begin{equation*}
{\tiny
\mathcal{T}_1=
\begin{bmatrix}[cccc|ccc|c|ccc]
  T_{1,1,1}&T_{1,2,1}  & \dots & T_{1,n_2,1}&T_{1,1,2} & \dots & T_{1,n_2,2}& \dots &T_{1,1,n_3}& \dots & T_{1,n_2,n_3} \\
   T_{2,1,1}&T_{2,2,1}  & \dots & T_{2,n_2,1}&T_{2,1,2}& \dots & T_{2,n_2,2}& \dots &T_{2,1,n_3}& \dots & T_{2,n_2,n_3}\\
 \ &\  & \vdots & \ & \   & \vdots &  \ & \dots & \  & \vdots & \\\
   T_{n_1,1,1}&T_{n_1,2,1}  & \dots & T_{n_1,n_2,1}&T_{n_1,1,2}& \dots & T_{n_,n_2,2}& \dots &T_{n_1,1,n_3}& \dots & T_{n_1,n_2,n_3}\\
\end{bmatrix}}
\end{equation*}
In the same way, paying attention to the cyclic nature of indices $i,j,k,$ one can define flattenings $\mathcal{T}_2$ and
$\mathcal{T}_3.$

One then defines the {\it mutilinear} rank (or  F-rank) of the tensor $\T$ as
F-$\rk\T =(\rk{\T_1},\rk{\T_2},\rk{\T_3}).$
\begin{remark}
\label{invfrk}
Let $M_r\in GL(n_r)$ be invertible matrices for $r =1,2,3.$ Let $\T_r$ be the $r$-th flattening of a tensor $\T$ as above. Then the F-$\rk\T$ is invariant under the left action of $GL(n_r)$ and the right action of $GL(n_s n_t)$ for $s,t \neq r.$ In particular the F-$\rk\T$ is invariant under the right multiplication of $M_s \tensor M_t \in GL(n_s n_t).$ 

\end{remark}







\subsection{Core of a Tensor}\label{hosvdgeneral}

Let $\mathcal{T}=[T_{i,j,k}]\in V_1 \otimes V_2 \otimes V_3$, where, as before, $V_1, V_2, V_3$ are vector spaces of dimension, respectively, $n_1,n_2,n_3$, with fixed bases and assume that F-$\rk{\T} = (r_1, r_2, r_3).$ Standard procedures in applications associate a {\em core tensor} ${\mathcal C}$ to ${\mathcal T}$. In this paper, following \cite{tu}, by a core tensor  of ${\mathcal T}$ we mean a tensor ${\mathcal C}$ that satisfies the following properties:
\begin{itemize}
    \item[1.] ${\mathcal C} \in Z_1 \otimes Z_2
\otimes Z_3$, where $Z_1, Z_2, Z_3$ are vector spaces of dimension, respectively, $r_1,r_2,r_3$;
\item[2.] there exist semi-orthogonal matrices $U_j$, i.e. $U_j^*U_j = I_{r_j}$, of size $n_j \times r_j$ for $j=1,2,3$ such that: \begin{itemize}
    \item [a.]the multilinear multiplication $(U^*_1,U^*_2,U^*_3) \cdot$ gives a map
$$(U^*_1,U^*_2,U^*_3) \cdot : V_1 \otimes V_2 \otimes V_3 \rightarrow Z_1 \otimes Z_2\otimes Z_3$$ with $$(U^*_1,U^*_2,U^*_3) \cdot \T = \mathcal{C};$$
\item[b.]the multilinear multiplication $(U_1,U_2,U_3) \cdot$ gives a map
$$(U_1,U_2,U_3) \cdot : Z_1 \otimes Z_2 \otimes Z_3 \rightarrow V_1 \otimes V_2\otimes V_3$$ with $(U_1,U_2,U_3) \cdot {\mathcal C} = \T.$ 
\end{itemize}  
\end{itemize}

We recall here the {\it{higher-order singular value decomposition}} (HOSVD) procedure which is the standard approach to the computation of a core of a tensor. It generalizes to tensors the standard (compact) singular value decomposition process for matrices. 

\noindent Let $\T$ be a tensor of order $3$ with flattening matrices $\T_1, \T_2, \T_3$ and \Frank{\T} $ = (r_1,r_2,r_3).$ Then
$\T_1$ is a $n_1 \times (n_2n_3)$ matrix and one can perform the (compact) SVD to $\T_1$:

$$ \T_1 = U_1 \Sigma_1 W_1^*,$$

\noindent where $\Sigma_1$ is a $r_1 \times r_1$ square diagonal matrix and where $U_1$ and $W_1$ are, respectively, $n_1 \times r_1$ and $(n_2n_3) \times r_1$ matrices such that $U_1^*U_1 = W_1^*W_1 = I_{r_1}$. Similarly, one can consider the SVD of $\T_2$ and $\T_3$, namely $ \T_2 = U_2 \Sigma_2 W_2^*, \quad  \T_3 = U_3 \Sigma_2 W_3^*$.

The HOSVD procedure for the construction of a core ${\mathcal C}$ of $\T$ consists then of the following multilinear multiplication: $$(U^*_1,U^*_2,U^*_3) \cdot \T = \mathcal{C}.$$
\subsection{Multiview Geometry and Grassmann Tensors}
\label{prelimCV}

For the convenience of the reader we recall standard facts and notation in the context of projective reconstruction in computer vision. A
{\it scene} is a set of $N$ points $\{\nbXi \}_{i=1,
\dots, N}$ in $\Pin{k}=\mathbb{P}(W),$ where $W$ is a vector space of dimension $k+1.$ A {\it camera} $P$ is a projection from $\Pin{k}$ onto the target space ({\it view})
$\Pin{h} = \mathbb{P}(\mathcal{V}),$ where $\mathcal{V}$ is a vector space of dimension $h +1, h < k,$ from a linear center $C_P = \mathbb{P}(K),$ where $K$ is a vector space of dimension $k-h.$ Once bases have been chosen in $W$ and $\mathcal{V},$ $P$ can be identified with
a $(h+1) \times (k+1)-$ matrix of maximal rank, defined up to a
constant, for which we use the same symbol $P$. With this
notation, $K$ is the right annihilator of $P,$ and using the same notation $\mathbf{X}$ for the point's homogeneous coordinates in the chosen bases, $P(\mathbf{X})$ denotes the image $P\cdot \mathbf{X}$ of a point $\mathbf{X}$ in
$\Pin{k}.$ 

In the context of multiple view geometry, one considers a set of
multiple images of the same scene, obtained from a set of cameras
$P_j:\Pin{k}\setminus C_j \to \Pin{h_j}$ where $\Pin{k} = \mathbb{P}(W),$ $\Pin{h_j} = \mathbb{P}(\mathcal{V}_j)$, and  $C_j = \mathbb{P}(K_j).$ Two different images 
$P_l(\mathbf{X})$ and $P_m(\mathbf{X})$ of
the same point $\mathbf{X}$ are \textit{corresponding points} and,
more generally, $r$ linear subspaces $\mathcal{S}_j \subset
\Pin{h_j},$ $j=1,\dots, r$ are said to be \textit{corresponding}
if there exists at least one point $\mathbf{X} \in \Pin{k}$ such
that $P_j(\mathbf{X})\in \mathcal{S}_j$ for $j=1,\dots, r.$
In \cite{Hart-Schaf} Hartley and Schaffalitzky
introduced {\it Grassmann tensors} which
encode the relations between sets of corresponding subspaces in
the various views. We recall here the basic elements of their
construction.

Consider, as above, a set of projections $P_j:\Pin{k}\setminus{C_j} \to
\Pin{h_j},$ $j = 1,\dots,r,$ $h_j \geq 2$ and a {\it profile},
i.e. a partition $(\alpha_1, \alpha_2, \dots, \alpha_r)$ of $k+1,$
where $1 \leq \alpha_j \leq h_j$ for all $j,$ and $\sum\alpha_j =
k+1.$

Let $\{\mathcal{S}_j\},$ $j=1,\dots,r,$ where $\mathcal{S}_j
\subset \Pin{h_j},$ be a set of general $s_j$-spaces, with
$s_j=h_j-\alpha_j,$ and let $S_j$ be a maximal rank
$(h_j+1)\times (s_j+1)-$matrix whose columns are a basis for
$\mathcal{S}_j$. By definition, if all the $\mathcal{S}_j$ are
corresponding subspaces there exist a point $\mathbf{X} \in
\Pin{k}$ such that $P_j(\mathbf{X})\in \mathcal{S}_j$ for
$j=1,\dots, r.$ In other words there exist $r$ vectors
$\mathbf{v_j} \in \mathbb{C}^{s_j+1}$ $j = 1,\dots,r,$ such that:
\begin{equation}
\label{grasssystem}
\left[%
\begin{array}{ccccc}
  P_1 & S_1& 0 & \dots & 0\\
  P_2 & 0 & S_2 & \dots & 0 \\
  \vdots & \vdots & \vdots & \vdots & \vdots \\
  P_r & 0 & \dots & 0 & S_r
\end{array}
\right]
			\cdot
\left [
\begin{array}{c}
				\mathbf{X}\\
				\mathbf{v_1} \\
				\mathbf{v_2} \\
				\vdots \\
				\mathbf{v_r} \\
\end{array}
\right]
			=
   \left[
\begin{array}{c}
				0 \\
				0 \\
				\vdots \\
				0 \\
\end{array}
\right]
		\end{equation}

The existence of a non trivial solution
$\{\mathbf{X},\mathbf{v_1},\dots,\mathbf{v_r}\}$ for  system
(\ref{grasssystem}) implies that the system matrix has zero
determinant. This determinant can be thought of as an $r$-linear
form, i.e. a tensor, in the Pl\"{u}cker coordinates of the spaces
$\mathcal{S}_j.$ This tensor is called the {\it Grassmann tensor} $\T$ with profile $(\alpha_1, \dots, \alpha_r).$ 

More explicitly, the entries of the Grassmann tensor are computed as maximal minors of the matrix:
\begin{equation}
\left[%
\begin{array}{c|c|c|c}
\label{matricetrasposta_r}
  {P_1}^T & {P_2}^T & \dots & {P_r}^T \\
\end{array}%
\right],
\end{equation}
 obtained by selecting $\alpha_j$ columns
from ${P_j}^T$, for $j=1, \dots, r.$ 
Notice that each column in $P_j^T$ can be thought of as an element in ${\mathbb P}(V_j)$, where $V_j= \bigwedge^{s_j+1}((W/K_j)^*)$ is the vector space of dimension
$n_j =\binom{h_j+1}{h_j-\alpha_j + 1}=\binom{h_j+1}{s_j + 1}$ such that the Grassmannian $G(s_j, h_j) \subset \mathbb{P}(V_j)=\mathbb{P} (\bigwedge^{s_j+1}((W/K_j)^*).$ As a consequence,
$\T \in V_1 \tensor V_2\tensor ... \tensor V_r$. Therefore for each $j=1, \ldots, r,$ an $s_j$-dimensional subspace ${\mathcal S}_j$ can be described as the intersection of $\alpha_j=h_j-s_j$ hyperplanes of $\Pin{k}$ containing $\mathbb{P}(K_j).$  In other words, the columns of each $P^T_j$ may be viewed as hyperplanes of ${\mathbb P}^k$ containing the center $C_j$. Moreover, the choice of $\alpha_j$ columns of $P^T_j$ gives an element in $Gr(\alpha_j-1, h_j) \subset {\mathbb P}(\bigwedge^{\alpha_j}(W/K_j))$ which is the dual Grassmannian of $Gr(s_j,h_j).$ 

It is useful to observe that a right action of
$GL(k+1)$ on \brref{matricetrasposta_r}, i.e.  a change of
coordinates in the ambient space $\Pin{k},$  does not alter the tensor,
as all entries are multiplied by the same nonzero constant.

As far as the effect of changes of coordinates in each of the view we have the following remark:
\begin{remark}
\label{invfrk2}
   The F-$\rk\T$ is invariant under change of coordinates in each of the views $\Pin{h_r}.$ From Remark \ref{invfrk} it is enough to show that any left action of $GL(h_j + 1)$ on  $P_j^{T},$ i.e. a change of coordinates in the corresponding view, induces a linear invertible transformation on $V_j.$ Indeed, any transformation $H_j \in GL((W/K_j))$ yields the transformation $\bigwedge^{\alpha_j}H_j$ on the Plu\"cker coordinates of $Gr(\alpha_j-1,h_j)$. Since the tensor is expressed in terms of the Pl\"ucker coordinates of the Grassmanniann $Gr(s_j,h_j)$, the transformation induced on the tensor by the matrix $H_j$ is $\bigwedge^{s_j+1} \left( H_j^*\right)^{-1}$, where the adjoint (transpose) is needed because of the dual coordinates and where the inverse appears because of the action on the coefficients of the tensor ${\mathcal T}$.
\end{remark}

\subsection{Canonical Form of Trifocal Grassmann Tensors}
\label{canformsec}
In \cite{BBBT1} the authors showed that, under some generality assumptions, one can obtain a canonical form $\T^c$ for a trifocal Grassmann tensor $\T$ that leads to a direct computation of its rank. It turns out that the same canonical form allows us to successfully compute the multilinear rank of $\T$ as well, under the same genericity assumption. For the convenience of the reader here we summarize the construction of the canonical form $\T^c$ for $\T,$ referring the reader to \cite{BBBT1} for additional details.

Let $\T$ be a trifocal Grassman tensor corresponding to projection matrices $P_j:\Pin{k}\setminus C_j \to \Pin{h_j},$ $j=1,2,3,$ with profile $(\alpha_1,\alpha_2, \alpha_3),$ and let $L_1$,
$L_2$ and $L_3$ be the vector spaces of dimension $h_1+1$, $h_2+1$
and $h_3+1$ respectively, spanned by the columns of ${P_1}^T$,
${P_2}^T$ and ${P_3}^T.$ Let $\Lambda_1=\mathbb{P}(L_1)$,
$\Lambda_2=\mathbb{P}(L_2)$ and $\Lambda_3=\mathbb{P}(L_3)$.

We consider, for each triplet of distinct integers $r,s,t\in\{1,2,3\}$
the following integers:

\begin{align}
&i_{r,s}=h_r+h_s+1-k;\label{numrel1}\\
&i=h_1+h_2+h_3+1-2k;\label{numrel2} \\
&j_{r,s}=i_{r,s}-i=k-h_t.\label{numrel3}
\end{align}
Notice that the definition of $j_{r,s}$ is independent of the order of the indices, i.e. $j_{r,s} = j_{s,r}.$
\noindent Our generality assumption is the following:
\begin{assumption}

 \label{g.a.} For any choice of $r,s,t$ with $\{r,s,t\}=\{1,2,3\}$, $L_t$ and the intersection $\Lambda_{rs}=L_r \cap L_s$  span $\mathbb{C}^{k+1},$ or, equivalently, the linear span of each
pair of centers does not intersect the third one.
\end{assumption}

\noindent This assumption implies, in particular, that for any
choice of a pair $r,s$, the span of $L_r$ and $L_s$ is the whole
$\mathbb{C}^{k+1},$ or, in other words, that the two centers $C_r$
and $C_s$ do not intersect.

Under Assumption \ref{g.a.}, applying Grassmann formula one sees that the three
numbers above have the following meaning: $i_{r,s}= dim (L_r \cap
L_s) \geq 0$, for any choice of $r,s$ , $i= dim (L_1 \cap L_2 \cap
L_3) \geq 0$ and $j_{r,s}$ is the affine dimension of the center
$C_t$ i.e. $k-{h_t}=j_{rs}$ for $r,s,t=1,2,3.$

In \cite{BBBT1} it is shown that under Assumption \ref{g.a.} a suitable choice of bases, realized by $H_j \in GL(h_j+1),$  for $j=1,2,3,$ and $K \in GL(k+1),$ transforms the matrix \brref{matricetrasposta_r} as
\begin{equation}
\Phi^k_{h_1,h_2,h_3}:=\left[(H_1P_1K)^T|(H_2P_2K)^T|(H_3P_3K)^T\right].
\end{equation}
so that
\begin{equation}
\label{canmat}
\Phi^k_{h_1,h_2,h_3}:=\left[%
\begin{array}{ccc|ccc|ccc}
  I_i & \mathbf{0} & \mathbf{0} & I_i & \mathbf{0} & \mathbf{0} & I_i & \mathbf{0} & \mathbf{0} \\
  \mathbf{0} & I_{j_{1,2}} & \mathbf{0} & \mathbf{0} & I_{j_{1,2}} & \mathbf{0} & \mathbf{0} & \mathbf{0} & \mathbf{0}\\
  \mathbf{0} & \mathbf{0} & I_{j_{1,3}} & \mathbf{0} & \mathbf{0} & \mathbf{0} & \mathbf{0} & I_{j_{1,3}} & \mathbf{0}\\
  \mathbf{0} & \mathbf{0} & \mathbf{0} & \mathbf{0} & \mathbf{0}& I_{j_{2,3}} & \mathbf{0} & \mathbf{0} & I_{j_{2,3}}\\
\end{array}%
\right].
\end{equation}

As described above, entries of $\T^c$ are given by the maximal minors of \brref{canmat}, obtained by selecting $\alpha_j$ columns
from ${H_jP_jK}^T$, for $j=1,2, 3.$ More precisely, as in \cite{BBBT1}, let $(a_1,a_2,a_3)$ be a partition of $\alpha_1$ and let $(b_1,b_2,b_3)$ and $(c_1,c_2,c_3)$ be partitions of $\alpha_2$ and $\alpha_3,$ respectively. Each entry of  $\T^c$ is a maximal minor $\T^c_{I,J,K}$ of \brref{canmat} built by choosing $a_1$ columns from $I_i,$ $a_2$ columns from $I_{j_{1,2}},$ $a_3$ columns from $I_{j_{13}},$ appropriately completing them with zero vectors to obtain entire columns of \brref{canmat} and proceeding analogously with $b_1,b_2,b_3$ and the second block of \brref{canmat} and with $c_1,c_2,c_3$ and the third block of \brref{canmat}, where  $I=(i_1, \dots, i_{s_1+1}),$ $J=(j_1, \dots,
j_{s_2+1}),$ $K=(k_1, \dots, k_{s_3+1}),$ with $1 \leq i_1 < \dots < i_{s_1+1} \leq
h_1+1$, $1 \leq j_1 < \dots < j_{s_2+1} \leq h_2+1$ and $1 \leq
k_1 < \dots < k_{s_3+1} \leq h_3+1$ are the indices of the columns of the three blocks of \brref{canmat} that were {\it not} chosen. 
As already recalled in \cite{BBBT1}, the entries $\T^c_{I,J,K}$ of the tensor ${\mathcal T^c}$ are indexed with respect to the lexicographical order of the families of multi-indices $\{I\}, \{J\},$ and $\{K\}$.
If we consider the first flattening $\T^c_1$ of $\T^c,$ one then sees that a row of $\T^c_1$ corresponds to one specific choice of  $a_1$ columns from $I_i,$  $a_2$ columns from $I_{j_{12}},$ and $a_3$ columns from $I_{j_{1,3}},$  with $a_1+a_2+a_3=\alpha_1$. Similarly one sees the role of specific choices of $b_u$ columns and $c_u$ columns from the corresponding submatrices of $\Phi^k_{h_1,h_2,h_3}$ in determining a chosen row of $\T^c_2$ and $\T^c_3$ respectively.

\begin{remark}
In the following section we will make use of the canonical form (\ref{canmat}) in order to determine the multilinear rank of a Grassmann tensor satisfying Assumption \ref{g.a.}. As mentioned in \cite{BBBT1}, if Assumption \ref{g.a.} doesn't hold, we cannot obtain a canonical form depending only on the dimension of the various spaces and, indeed, even the rank of the Grassmann tensor depends also on the geometric configuration of the three projections. 

This observation is still true as far as the multilinear rank is concerned; this is the reason why in this paper we will always assume that Assumption \ref{g.a.} is satisfied.

As an example, consider the case of three projections from $\mathbb{P}^4$ to  $\mathbb{P}^2$, with profile $(2,2,1).$ Notice that in this case $i=-1$. We can choose projection matrices $P_j$, $j=1,2,3$ such that:

$$	\label{non_gen}
	P_1^T:=\left[%
	\begin{array}{ccc}
		1 & 0 & 0 \\
		0 & 1 & 0 \\
		0 & 0 & a \\
		0 & 0 & b \\
		0 & 0 & c \\
			\end{array}%
	\right], 
		P_2^T:=\left[%
	\begin{array}{ccc}
		1 & 0 & 0 \\
		0 & 0 & d \\
		0 & 1 & 0 \\
		0 & 0 & e \\
		0 & 0 & f \\
	\end{array}%
	\right], 
		P_3^T:=\left[%
	\begin{array}{ccc}
		0 & 0 & g \\
		1 & 0 & 0 \\
		0 & 1 & 0 \\
		0 & 0 & h \\
		0 & 0 & k \\
	\end{array}%
	\right],$$ 

\noindent with $(a,b,c) \neq (0,0,0), (d,e,f) \neq (0,0,0), (g,h,k) \neq (0,0,0).$
\end{remark}
The first flattening of the corresponding trifocal tensor is

$$\left[%
\begin{array}{ccccccccc}
	g(ce-bf) & a(fh-ek) & ch-bk & 0 & bf-ce & 0 & 0 & 0 & 0 \\
d(bk-ch) & 0 & 0 & 0 & 0 & ce -bf& 0 & 0 & 0 \\
ek-fh & 0 & 0 & 0 & 0 & 0 & 0 & 0 & 0 \\
\end{array}%
\right],$$

\noindent whose rank is generically  $3$ and drops to at most $2$ if $ek=fh.$

\section{The multilinear rank of trifocal Grassmann tensors}\label{multilin}
In this section, $\T$ will always be a trifocal Grassmann tensor of dimension $n_1\times n_2\times n_3,$ with profile $(\alpha_1,\alpha_2,\alpha_3),$ satisfying Assumption \ref{g.a.} and $\T^c$ will be its canonical form introduced in Section \ref{canformsec}. Leveraging properties of $\T^c$ we will obtain results on F-$\rk{\T.}$
\label{multilinrk}
\begin{Lemma}
\label{lemmanminusv}
Let $\T^c$ be the canonical from of a trifocal Grassmann tensor $\T$ of dimension $n_1\times n_2\times n_3,$ satisfying Assumption \ref{g.a.}. Then the multilinear rank is F-$\rk{\T^c} = (n_1 - v_1, n_2 - v_2, n_3 - v_3)$ where $v_r$ is the number of zero rows in the flattening matrix $\T^c_r.$
\end{Lemma}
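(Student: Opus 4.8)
The plan is to establish, for each $r\in\{1,2,3\}$, the two inequalities $\rk{\T^c_r}\le n_r-v_r$ and $\rk{\T^c_r}\ge n_r-v_r$ separately. The first is immediate: deleting the $v_r$ zero rows of $\T^c_r$ changes neither its row space nor its rank, and a matrix with only $n_r-v_r$ rows has rank at most $n_r-v_r$. All the content lies in the reverse inequality, and the key is to exploit the extremely rigid shape of the canonical matrix \brref{canmat}. First I would record the decisive observation that, under Assumption \ref{g.a.} (which guarantees $i\ge 0$ and $j_{r,s}\ge 0$, so that \brref{canmat} is well defined), \emph{every column of \brref{canmat} is a standard basis vector} $e_p$ of $\mathbb{C}^{k+1}$: its nine column-blocks are, in order, identity blocks each stacked into a single row-band, so the columns of the first block are $e_1,\dots,e_{h_1+1}$, and those of the second and third blocks are the $e_p$ with $p$ running over the appropriate row-bands. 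Consequently every entry $\T^c_{I,J,K}$, being the maximal minor of \brref{canmat} obtained by selecting $\alpha_1,\alpha_2,\alpha_3$ columns from the three blocks, lies in $\{0,\pm 1\}$: it equals $\pm 1$ exactly when the $k+1$ chosen standard basis vectors are pairwise distinct — equivalently, since $\alpha_1+\alpha_2+\alpha_3=k+1$, when the selected column-indices partition $\{1,\dots,k+1\}$ — and it is $0$ otherwise.

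Next I would use this to analyse each flattening. Fix $r=1$; the cases $r=2,3$ are identical by the symmetry of the construction in the three views. A row of $\T^c_1$ is indexed by the set $A$ of $\alpha_1$ columns selected from block $1$ (a size-$\alpha_1$ subset of the column-index set $\{1,\dots,h_1+1\}$ of that block), and a column of $\T^c_1$ is indexed by the pair $(B,C)$ recording the $\alpha_2$ and $\alpha_3$ columns selected from blocks $2$ and $3$. By the previous paragraph the entry in row $A$, column $(B,C)$ is $\pm 1$ when $A\sqcup B\sqcup C=\{1,\dots,k+1\}$ and $0$ otherwise. Fixing a column $(B,C)$: if $B\cap C\neq\emptyset$ the column is identically zero, while if $B\cap C=\emptyset$ the only row that can carry a nonzero entry is $A=\{1,\dots,k+1\}\setminus(B\cup C)$, and even that only when this complement happens to be a legitimate row index, i.e.\ is contained in $\{1,\dots,h_1+1\}$. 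In either case \emph{each column of $\T^c_1$ contains at most one nonzero entry}.

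I would close with the elementary fact that a matrix $M$ in which every column has at most one nonzero entry satisfies $\rk M=\#\{\text{nonzero rows of }M\}$: after discarding the zero rows, each remaining row has a nonzero entry in some column whose unique nonzero entry then sits in that row; these ``pivot'' columns are pairwise distinct as the row varies, so the corresponding square submatrix is diagonal with nonzero diagonal and hence invertible, giving $\rk M\ge\#\{\text{nonzero rows of }M\}$, while the reverse inequality is obvious. Applying this to $M=\T^c_r$ for $r=1,2,3$ yields $\rk{\T^c_r}=n_r-v_r$, that is F-$\rk{\T^c}=(n_1-v_1,n_2-v_2,n_3-v_3)$. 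I do not expect a genuine obstacle here: the whole argument is forced once one sees that every column of \brref{canmat} is a basis vector. The only step that demands care is the bookkeeping matching rows and columns of each flattening $\T^c_r$ with the correct families of column-selections in \brref{canmat}, together with confirming that \brref{canmat} is indeed the normal form produced in \cite{BBBT1} under Assumption \ref{g.a.}; once that identification is in place, the rank computation is automatic.
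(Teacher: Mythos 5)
Your proof is correct and follows essentially the same route as the paper: both arguments rest on the key observation that each column of a flattening $\T^c_r$ carries at most one nonzero entry, so the nonzero rows are linearly independent and $\rk{\T^c_r}=n_r-v_r$. The only difference is that you derive this column property directly from the fact that every column of the canonical matrix is a standard basis vector (forcing the complementary row selection), whereas the paper imports the same fact from the proof of Theorem 5.2 of \cite{BBBT1}.
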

\begin{proof}
In the proof of \cite[Theorem 5.2]{BBBT1} the authors showed that, with our assumptions on $\T,$
if $\T^c_{\hat{i},\hat{j},\hat{k}} \neq 0$ then $\T^c_{\hat{i},\hat{j}, k} =0$ for all $k \neq \hat{k}.$ Considering the cyclic role of the indices $i,j,k$ the above observation also says that $\T^c_{i,\hat{j}, \hat{k} }=0$ for all $i \neq \hat{i},$ and $\T^c_{\hat{i},j, \hat{k}} =0$ for all $j \neq \hat{j}.$
 Assume $T^c_{\hat{i},\hat{j},\hat{k}} \neq 0,$ then the above observation can be visualized in $\T^c_1$ as follows:

$$
\tiny{
\left[
\begin{array}{cccc|cccc|cccc|cccc}
..&.. &.. & ..&..&..&.. &.. & ..&0&..&.. &.. & ..&..&..\\
..&0_{\hat{i},\hat{j},1}&..&.. &.. &0_{\hat{i},\hat{j},2}&..&..&0 &*_{\hat{i},\hat{j},\hat{k}}&0&0&.. &0_{\hat{i},\hat{j},{n_3}} & ..&..\\
..&.. &.. & ..&..&..&.. &.. & ..&0&..&.. &.. & ..&..&..\\
..&.. &.. & ..&..&..&.. &.. & ..&0&..&.. &.. & ..&..&..\\
\end{array}%
\right]}
$$
 While $\T^c_1$ can have more than one nonzero element on the same row, it cannot contain two nonzero elements on the same column. Hence any two rows containing non-zero elements are linearly independent. Therefore, if $v_1$ is the number of zero rows of $\T^c_1,$ it is  $\rk{\T^c_1} = n_1 - v_1.$ A similar argument can be carried out for $\T^c_2$ and $\T^c_3.$
 \end{proof}

 \begin{Lemma}
 \label{lemmajrsconditions}
Let $\T^c$ be the canonical form of a trifocal Grassmann tensor $\T$ of dimension $n_1\times n_2\times n_3,$ with profile $(\alpha_1,\alpha_2,\alpha_3),$ satisfying Assumption \ref{g.a.}. Let $(r,s,t)$ be any permutation of $\{1,2,3\}$ and let $j_{r,s}$ be defined as in \brref{numrel3}. Then the flattening matrix $\T^c_r$ contains zero rows if and only if

\begin{alignat}{1}
\label{jrineq} 
j_{r,s} - \alpha_s - 1 &\geq \max (0,\alpha_r -i-j_{r,t} ) \ \ \ \text{      or      }\\
 j_{r,t} - \alpha_t + 1 &\geq \max (0,\alpha_r -i-j_{r,s} ).\nonumber
\end{alignat}
 Moreover, conditions \brref{jrineq} are mutually exclusive.
 \end{Lemma}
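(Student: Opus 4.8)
The plan is to translate the existence of a zero row in $\T^c_r$ into a purely combinatorial statement about the partitions $(a_1,a_2,a_3)$ of $\alpha_r$ that index the rows. Fix $r$ and let $(s,t)$ be the two other indices. By the description of the canonical form, a row of $\T^c_r$ corresponds to a choice of $a_1$ columns from the copy of $I_i$, $a_2$ columns from $I_{j_{r,s}}$, and $a_3$ columns from $I_{j_{r,t}}$, sitting in the $r$-th block of $\Phi^k_{h_1,h_2,h_3}$, with $a_1+a_2+a_3=\alpha_r$. First I would record the admissibility constraints on such a triple: each $a_u\ge 0$, and $a_1\le i$, $a_2\le j_{r,s}$, $a_3\le j_{r,t}$ (one cannot select more columns than a block has). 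The relevant question is then: when does there exist an \emph{admissible} partition $(a_1,a_2,a_3)$ of $\alpha_r$ for which \emph{every} maximal minor $\T^c_{I,J,K}$ in the corresponding row vanishes, i.e.\ the whole row is zero?

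The key step is the vanishing analysis of a single minor. Using the block structure of \brref{canmat}, a minor built from the chosen columns of the $r$-th block together with some selection from the $s$-th and $t$-th blocks is nonzero exactly when the selected columns across all three blocks, after zero-padding, form an invertible $(k+1)\times(k+1)$ matrix. Because the three blocks overlap only in the $I_i$-rows (and the shared identity blocks $I_{j_{r,s}}$, $I_{j_{r,t}}$), one gets a clean criterion: the row indexed by $(a_1,a_2,a_3)$ is entirely zero iff it is \emph{impossible} to complete the $r$-th block's $\alpha_r$ chosen columns to a basis of $\mathbb{C}^{k+1}$ using columns available from the other two blocks. I would make this precise by a rank/dimension count. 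The columns not used from block $r$ already force certain coordinate directions to be covered; the columns available from blocks $s$ and $t$ can only supply directions living in $L_s$ and $L_t$. Counting how many independent directions blocks $s$ and $t$ can still contribute — which is governed by $\alpha_s$, $\alpha_t$, and the overlaps $i$, $j_{r,s}$, $j_{r,t}$ — and comparing with the deficit left by block $r$, yields exactly an inequality of the shape $j_{r,s}-a_2 \ge (\text{deficit involving }a_1)$ or the symmetric one with $j_{r,t}-a_3$. Optimizing over admissible $(a_1,a_2,a_3)$ — i.e.\ pushing $a_2$ as small as possible in the first alternative, $a_3$ as small as possible in the second, subject to $a_1\le i$ and the total being $\alpha_r$ — produces the two displayed inequalities \brref{jrineq}, with the $\max(0,\cdot)$ appearing precisely because $a_1$ is capped by $i$ so the residual $\alpha_r - i - j_{r,t}$ (resp.\ $\alpha_r - i - j_{r,s}$) may or may not be the binding constraint.

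For mutual exclusivity I would argue by contradiction: suppose both inequalities in \brref{jrineq} held. Add them, use $j_{r,s}=k-h_t$, $j_{r,t}=k-h_s$, $i=h_1+h_2+h_3+1-2k$, and the profile relation $\alpha_r+\alpha_s+\alpha_t=k+1$ together with $\alpha_s\le h_s$, $\alpha_t\le h_t$, $\alpha_r\le h_r$. A short substitution should collapse the sum of the two left-hand sides to something strictly smaller than the sum of the two right-hand sides, giving a contradiction; the $\max$'s only help here, since replacing $\max(0,x)$ by $x$ can only decrease the right-hand side. I expect the main obstacle to be the single-minor vanishing criterion: one must argue carefully that if \emph{some} completion to a basis exists then \emph{some} maximal minor in that row is genuinely nonzero (not accidentally cancelling), which follows because each $\T^c_{I,J,K}$ is, up to sign, a product of the shared identity entries and hence is literally $\pm 1$ when the supporting columns are independent and $0$ otherwise — so no cancellation can occur within a single entry. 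Once that is nailed down, the rest is the bookkeeping of the dimension counts above.
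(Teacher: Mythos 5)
Your framework for the main equivalence is essentially the paper's: rows of $\T^c_r$ are indexed by admissible triples $(a_1,a_2,a_3)$, each entry of $\T^c$ is $0$ or $\pm 1$ so no cancellation can occur within an entry, and a row vanishes iff the chosen $\alpha_r$ columns of block $r$ cannot be completed to $k+1$ independent columns using $\alpha_s$ columns from block $s$ and $\alpha_t$ from block $t$. The gap is that you leave the decisive step --- ``a rank/dimension count \dots yields exactly an inequality of the shape $j_{r,s}-a_2\ge\dots$'' --- unexecuted, and that is where essentially all the work lies. Concretely, the completion question is an integer feasibility problem in $(b_1,b_2,b_3,c_1,c_2,c_3)$ subject to the covering equations $b_2=j_{1,2}-a_2$, $c_2=j_{1,3}-a_3$, $b_1+c_1=i-a_1$, $b_3+c_3=j_{2,3}$, the profile sums, and the box constraints; the paper devotes most of its proof (system \brref{systemabc}, conditions \brref{condonbc}, and a four-case analysis of the admissible interval for $c_3$) to showing this is solvable for \emph{every} row whenever both inequalities of \brref{jrineq} fail. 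Your ``if'' direction (pushing $a_s$ down to $\max(0,\alpha_r-i-j_{r,t})$, and noting that $b_2\le \alpha_2$ forces the whole row to die once $a_2\le j_{1,2}-\alpha_2-1$) is correct and matches the paper; the ``only if'' direction needs the feasibility argument spelled out. It can in fact be done more cleanly than in the paper: after solving for $b_2,c_2$, what remains is a $2\times 2$ transportation problem with nonnegative margins and equal totals, hence always feasible --- but some such argument must actually appear.

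Your proposed proof of mutual exclusivity --- add the two inequalities and derive a contradiction --- does not work. Summing $j_{1,2}-\alpha_2-1\ge\alpha_1-i-j_{1,3}$ and $j_{1,3}-\alpha_3-1\ge\alpha_1-i-j_{1,2}$ and substituting $i+j_{1,2}+j_{1,3}+j_{2,3}=\alpha_1+\alpha_2+\alpha_3$ collapses to $2j_{2,3}\le\alpha_2+\alpha_3-2$, which is no contradiction. Worse, the literal claim resists any such algebra: for $k=9$, $(h_1,h_2,h_3)=(7,5,5)$, profile $(4,3,3)$ one gets $i=0$ and $(j_{1,2},j_{1,3},j_{2,3})=(4,4,2)$, and \emph{both} inequalities of \brref{jrineq} hold (each side equals $0$), with the admissible triples $(0,0,4)$ and $(0,4,0)$ giving zero rows of each type. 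What the paper's argument actually establishes is the weaker, row-level statement that no single admissible triple can satisfy $a_2\le j_{1,2}-\alpha_2-1$ and $a_3\le j_{1,3}-\alpha_3-1$ simultaneously (this would force $a_1\ge i+j_{2,3}+2>i$); that is the disjointness one needs for the counting in Theorem \ref{Frank}, and it is what you should aim to prove instead of incompatibility of the two displayed inequalities. (Note also that the second inequality in the statement should read $j_{r,t}-\alpha_t-1$, consistently with the proof and with Claim \ref{remsetA}.)
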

 \begin{proof}
For simplicity, let us fix $(r,s,t) = (1,2,3)$  and conduct the proof for $\T^c_1$, noticing that the proof is identical for $\T^c_2$ and $\T^c_3,$ with a cyclic adjustment of the role of the three indices and of the parameters $\{a_j,b_j,c_j\}$ introduced in Section \ref{canformsec}.
Recall that $\T^c_1$ is the first flattening matrix of $\T^c = [\T^c_{\ell,j,k}],$ of dimension $n_1 \times (n_2  n_3)$ obtained by juxtaposing $n_3$ blocks of dimension $n_1 \times n_2,$  where $\ell$ runs over the rows of $\T^c_1,$ $j$ runs over the columns of each block, and $k$ runs over the blocks. Let $i$ be as defined in \brref{numrel2}, and let $\{a_j,b_j,c_j\}, $ $j=1,2,3$ be as in Section \ref{canformsec}.
As noted in Section \ref{canformsec}, a row of $\T^c_1$ corresponds to one specific choice of  $a_1$ columns from the first block of \brref{canmat},  $a_2$ columns from its second block, and $a_3$ columns from its thrid block,  with $a_i\geq 0,$ and $a_1+a_2+a_3=\alpha_1$. \\
Suppose $j_{13} - \alpha_3 -1 \geq \max (0,\alpha_1 -i-j_{1,2} )$, so that $j_{13} \geq \alpha_3 + 1,$ and let $a_1,a_2,a_3$ be such that $0\leq a_3 \leq j_{1,3} - \alpha_3 - 1.$ Notice that the assumption  $j_{13} - \alpha_3 -1 \geq \max (0,\alpha_1 -i-j_{1,2} )$ implies that there exists at least one such triplet with $a_1 + a_2 + a_3 = \alpha_1.$ Because $c_2 \leq \alpha_3,$ it is
\begin{equation}
\label{a3c2}
a_3 + c_2 \leq j_{1,3} -1.
\end{equation}
Recalling the canonical structure of the matrix \brref{canmat}, it follows from \brref{a3c2} that all elements of the row of $\T^c_1$ corresponding to a choice of $a_1,a_2,a_3$ as above are zero, as all the maximal minors corresponding to elements of this row are now forced to contain at least one duplicate column coming from $I_i, I_{j_{1,2}}$ or $I_{j_{2,3}}.$
The proof can be carried out with the obvious adjustments if $j_{1,2} -\alpha_2 - 1 \ge \max(0, \alpha_1 -i-j_{1,3}).$

Assume now that
\begin{alignat}{1}
j_{1,2} -\alpha_2 - 1 &< \max (0, \alpha_1 -i-j_{1,3}) \ \ \ \text{      and     } \label{max1}\\
j_{1,3} - \alpha_3 -1 &< \max(0, \alpha_1 -i-j_{1,2})\label{max2}.
\end{alignat}
We will show that every row in $\T^c_1$ contains at least one non-zero element. Let us fix a row of $\T^c_1$ by fixing non-negative values for $(a_1,a_2,a_3)$ with $\sum_\ell a_\ell = \alpha_1,$  $a_1 \le i, a_2 \leq j_{1,2},$ and $a_3 \leq j_{1,3}.$ As observed in \cite{BBBT1}, this row contains a non  zero element if the following system of linear equations
\begin{equation}
\label{systemabc}
\left\{
\begin{array}{l}
b_1+c_1= i -a_1\\
b_2= j_{1,2}- a_2\\
c_2  = j_{1,3}-a_3\\
b_3+c_3  = j_{2,3}\\
b_1+b_2+b_3  = \alpha_2\\
c_1+c_2+c_3  = \alpha_3\
\end{array}
\right .
\end{equation}
has at least one set of integer solutions in the unknowns $(b_1, b_2, b_3, c_1, c_2, c_3),$ that satisfy the following conditions:
\begin{align}
\label{condonbc}
 &0\leq  b_1\leq i       &       0 \leq   b_2\leq j_{1,2}&      &      0 \leq   b_3 \leq j_{2,3} \\
 &0\leq  c_1\leq i       &       0 \leq   c_2\leq j_{1,3}&      &       0 \leq   c_3 \leq j_{2,3}.\notag 
\end{align}
Our assumptions on $(a_1,a_2,a_3)$ imply that the second and third equation of \brref{systemabc} are already solved, satisfying the relevant \brref{condonbc}. Therefore it remains to show that it is possible to choose $0\leq c_3 \leq j_{2,3}$ such that 
$b_1= \alpha_2 - j_{1,2} + a_2 - j_{2,3} + c_3$ and $c_1 = \alpha_3 - j_{1,3} + a_3 - c_3$
satisfy the relevant \brref{condonbc}, i.e. 
\begin{equation}
\label{condonb1c1}
0\le \alpha_2 - j_{1,2} + a_2 - j_{2,3} + c_3 \le i\ \ \ \text{and} \ \ \ 0\le \alpha_3 - j_{1,3} + a_3 - c_3 \le i.
\end{equation}

Recalling that $i = \sum_\ell \alpha_{\ell} - j_{1,2} - j_{1,3} -j_{2,3}$, \brref{condonb1c1} give:
\begin{align}
j_{1,2} +j_{2,3} -\alpha_1 -\alpha_2 + a_3 &\leq c_3 \leq \alpha_1 + \alpha_3 - j_{1,3} - a_2 \label{B1}\\
j_{1,2} +j_{2,3} -\alpha_2 -a_2 &\leq c_3 \leq \alpha_3 - j_{1,3} +a_3. \label{B2}
\end{align}
As $\sum_\ell a_{\ell} = \alpha_1$ and $a_1 \ge 0,$ it follows that \brref{B2} imply \brref{B1}, hence, setting $LB=j_{1,2} +j_{2,3} -\alpha_2 -a_2$ and $UB=\leq \alpha_3 - j_{1,3} +a_3,$ we need to show that under assumptions \brref{max1} and \brref{max2} we can choose $c_3$ such that 
\begin{equation}
\label{condonc3}
0\le c_3\le j_{2,3}  \text{ and } LB\leq c_3 \leq UB.
\end{equation}

First notice that if $\alpha_1 - j_{1.2} - i >0$ then $j_{2,3} \ge \alpha_2$ and $LB \geq 0.$ Indeed in this case it is $\max (0, \alpha_1 - j_{1,2} -1) = \alpha_1 - j_{1,2} -i$ and \brref{max1} gives $j_{2,3} \geq \alpha_2.$ Because $j_{12} - a_2 \geq 0$ then it is $LB \geq 0.$ A very similar argument, using \brref{max2}, shows that if $\alpha_1 - j_{1.3} - i >0$ then $j_{2,3} \ge \alpha_3$ and $UB \leq j_{2,3}.$\\
Four different cases, according to the respective sign of $\alpha_1 - j_{1,2} - i$ and $\alpha_1 - j_{1,3} - i,$ need to be considered, as in the table below:
\begin{center}
\begin{tabular}{|c|c|c|}
\hline
Case& $\alpha_1 - j_{1,2} - i $ & $\alpha_1 - j_{1.3} - i$\\
\hline
1&$>0$&$>0$\\
2&$>0$&$\leq 0$\\
3&$\leq 0$&$>0$\\
4&$\leq 0$&$\leq 0$\\
\hline
\end{tabular} 
\end{center}

{\it Case 1.} From above it is $LB\geq0 $ and $UB\leq j_{2,3},$ hence one can choose any value $LB\leq c_3 \leq UB$ to satisfy \brref{condonc3}.

{\it Case 2.} From above it is $LB \geq 0.$ Choose $c_3 = LB.$ Because $\alpha_1 - j_{1,3} - i \leq0$ it is $\max(0,\alpha_1 - j_{1,3} - i) =0$ and thus \brref{max2} gives $j_{1,2} - \alpha_2 \leq 0.$ As $a_2 \geq 0,$ it is $LB \leq j_{2,3}$ and \brref{condonc3} are satisfied.

{\it Case 3.} From above it is $UB \leq j_{2,3}.$ Choose $c_3 = UB.$ Because $\alpha_1 - j_{1,2} - i \leq0$ it is $\max(0,\alpha_1 - j_{1,2} - i) =0$ and thus \brref{max1} gives $j_{1,3} - \alpha_3 -1 \leq 0.$ As $a_3 \geq 0,$ it is $UB \geq 0$ and \brref{condonc3} are satisfied.

{\it Case 4.} 
In this case we need to further consider the possible relative sign of $LB$ and $UB-j_{2,3},$ generating four possible cases as in the table below. In each case one can choose $c_3$ as indicated in the fourth column. Arguments similar to the ones used above in previous cases show that \brref{condonc3} are satisfied.
\begin{center}
    \begin{tabular}{|c|c|c|c|}
    \hline
    Case&LB&UB&$c_3$\\ \hline
    $i$     &$\geq 0$   &$\leq j_{2,3}$ & any $LB\leq c_3\leq UB$ \\
    $ii$    &$\geq 0$   &$> j_{2,3}$    & $c_3 = LB$ \\
    $iii$   &$< 0$      &$\leq j_{2,3}$ & $c_3 = UB$ \\
    $iv$    &$< 0$      &$> j_{2,3}$    & any $0\leq c_3\leq j_{2,3}$  \\
    \hline 
    \end{tabular}
\end{center}
Finally, notice that conditions \brref{jrineq} cannot both hold. If they did, then for any $0\leq a_2 \le j_{1,2}-\alpha_2 -1$ and $0\leq a_3 \leq j_{1,3} - \alpha_3 -1$ it would follow that:
\begin{equation}
\label{a2a3}
 a_2+a_3 \leq j_{1,2} + j_{1,3} - \alpha_2 - \alpha_3 -2.
  \end{equation}
  Recalling that $a_1 + a_2 + a_3 = \alpha_1,$  $\sum_j \alpha_j = k+1= i + j_{1,2} + j_{1,3} + j_{2,3},$ \brref{a2a3} would give $i+j_{2,3} \le a_1 -2,$ which is impossible as $a_1 \leq i.$
 \end{proof}
 \begin{claim}
      \label{remsetA}
 With the notation of this section, let $i$ and $j_{u,v}$ for $u,v\in\{1,2,3\},$ $u\neq v,$  be defined as in \brref{numrel2} and\brref{numrel3} and let $\T^c_1$ be the first flattening matrix of $\T^c.$
  Assume that $\rk{\T^c_1}<n_1,$ i.e. $\rk{\T^c_1}$ is not maximum, so that, by Lemma \ref{lemmajrsconditions}, $ j_{1,s} - \alpha_s -1 \geq \max (0, \alpha_1 -i -j_{1,t})$ for some $s,t \in\{2,3\}$ $t \neq s.$ 
Let
\begin{equation*}
\begin{split}
A= \{ (a_1,a_2, a_3) |&\, a_u\in \mathbb{Z}_{\geq 0},\sum_u a_u = \alpha_1, a_1 \le i, \\
&\max(0,\alpha_1 -i-j_{1,t})  \leq a_s\leq j_{1,s} -\alpha_s-1, \\ 
&a_t\le j_{1,t} \}
\end{split}
\end{equation*}
The cardinality $|A|$ of the set $A$ can be computed as follows. For each $(a_1,a_2,a_3) \in A$ set $m_1 = \min{(i,\alpha_1-a_s)}$ and $m_2 = \min{(j_{1,t}, \alpha_1-a_s)}.$
 Then
 \begin{equation*}
  |A|= \sum_{\max(0,\alpha_1 -i-j_{1,t})\le a_s \le j_{1,s} -\alpha_s -1} N_s
  \end{equation*}
  where
  \begin{equation*}
  N_s=
 \begin{cases}
 \alpha_1 - a_s + 1 & \text{ if } m_1 = m_2 = \alpha_1 - a_s \\
 i+1 & \text{ if }  m_1 = i \text { and } m_2 = \alpha_1 - a_s\\
 j_{1s} + 1 & \text{ if }  m_1 = \alpha_1 - a_s \text{ and } m_2 = j_{1,t}\\
 |i-\alpha_1+a_s+j_{1,t}|+1 & \text{ if } m_1 = i \text{ and } m_2 = j_{1,t}
 \end{cases}
 \end{equation*}
Similarly, one can define sets $B$ and $C,$ respectively, for flattenings $\T^c_2$ and $\T^c_3,$  if their ranks are not maximum. In those cases $b_u$ and $c_u$ play the role of $a_u$ and the $j_{u,v} $ are adjusted accordingly, taking into consideration their role in \brref{canmat} and in Lemma \ref{lemmajrsconditions}.
 \end{claim}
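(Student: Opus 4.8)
The plan is a direct lattice‑point count, obtained by slicing the set $A$ along the coordinate $a_s$. Fix the permutation $(r,s,t)=(1,2,3)$ as in the statement; then the three coordinates of a triple in $A$ are the distinct entries $a_1,a_s,a_t$. An element of $A$ is specified by first choosing a value of $a_s$ in the interval $\max(0,\alpha_1-i-j_{1,t})\le a_s\le j_{1,s}-\alpha_s-1$ and then choosing a pair $(a_1,a_t)\in\mathbb{Z}_{\geq 0}^2$ with $a_1+a_t=\alpha_1-a_s$, $a_1\le i$ and $a_t\le j_{1,t}$; this correspondence is a bijection between $A$ and such data. The interval for $a_s$ is non‑empty precisely because $\rk{\T^c_1}<n_1$, which by Lemma \ref{lemmajrsconditions} forces $j_{1,s}-\alpha_s-1\ge\max(0,\alpha_1-i-j_{1,t})$. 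Hence $|A|=\sum_{a_s}N_s$, where $N_s$ denotes the number of admissible pairs $(a_1,a_t)$ for the given value of $a_s$, and it remains only to evaluate $N_s$.

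So I would fix $a_s$ and set $\sigma:=\alpha_1-a_s\ge 0$. A pair $(a_1,a_t)$ as above is determined by $a_1$, and the four inequalities $0\le a_1$, $a_1\le i$, $0\le a_t=\sigma-a_1$, $a_t\le j_{1,t}$ are together equivalent to $\max(0,\sigma-j_{1,t})\le a_1\le\min(i,\sigma)$. With $m_1=\min(i,\alpha_1-a_s)=\min(i,\sigma)$ and $m_2=\min(j_{1,t},\alpha_1-a_s)=\min(j_{1,t},\sigma)$ as in the statement, and using $\max(0,\sigma-j_{1,t})=\sigma-\min(\sigma,j_{1,t})=\sigma-m_2$, the admissible $a_1$ form an integer interval of length $m_1-(\sigma-m_2)+1=m_1+m_2-\sigma+1$ whenever this number is positive, and there are none otherwise. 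I claim it is always at least $1$: if $m_1=\sigma$ or $m_2=\sigma$ this is clear since $m_1,m_2\ge 0$; in the only remaining case, $m_1=i$ and $m_2=j_{1,t}$, one has $m_1+m_2-\sigma+1=i+j_{1,t}-\alpha_1+a_s+1$, which is $\ge 1$ because the defining constraint gives $a_s\ge\max(0,\alpha_1-i-j_{1,t})\ge\alpha_1-i-j_{1,t}$; the same inequality shows $i+j_{1,t}-\alpha_1+a_s\ge 0$, so that $|i-\alpha_1+a_s+j_{1,t}|=i+j_{1,t}-\alpha_1+a_s$. Therefore $N_s=m_1+m_2-\sigma+1$ with no truncation.

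Next I would split into the four cases according to which argument attains each of $m_1$ and $m_2$. If $m_1=m_2=\sigma=\alpha_1-a_s$, then $N_s=\sigma+1=\alpha_1-a_s+1$. If $m_1=i$ and $m_2=\sigma=\alpha_1-a_s$, then $N_s=i+\sigma-\sigma+1=i+1$. If $m_1=\sigma=\alpha_1-a_s$ and $m_2=j_{1,t}$, then $N_s=\sigma+j_{1,t}-\sigma+1=j_{1,t}+1$. If $m_1=i$ and $m_2=j_{1,t}$, then $N_s=i+j_{1,t}-\alpha_1+a_s+1=|i-\alpha_1+a_s+j_{1,t}|+1$ by the previous paragraph. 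These are exactly the four branches defining $N_s$, and they agree on the overlaps (when $\sigma$ equals $i$ or $j_{1,t}$), so the case distinction is well defined. Summing $N_s$ over $\max(0,\alpha_1-i-j_{1,t})\le a_s\le j_{1,s}-\alpha_s-1$ then yields the stated formula for $|A|$. The analogous formulas for $B$ and $C$ follow by the cyclic relabelling of $\{1,2,3\}$ and of the parameters $\{a_u,b_u,c_u\}$, exactly as in the proof of Lemma \ref{lemmajrsconditions}.

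I do not expect a serious obstacle: the argument is elementary counting of integer points in a triangle‑shaped region. The one point requiring care is the verification, in the second step, that the naive count $m_1+m_2-\sigma+1$ never has to be truncated to $0$ — this is precisely where the hypothesis $\rk{\T^c_1}<n_1$ (through Lemma \ref{lemmajrsconditions}) and the explicit lower bound $a_s\ge\max(0,\alpha_1-i-j_{1,t})$ are used — together with checking that the four boundary cases are mutually consistent.
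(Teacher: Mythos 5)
Your argument is correct, and it is worth noting that the paper states this Claim without any proof at all (it is given as a remark-style \texttt{claim} environment), so your slicing argument supplies the missing justification in what is surely the intended way: fix $a_s$, reduce to counting integers $a_1$ in the interval $[\max(0,\sigma-j_{1,t}),\min(i,\sigma)]$ with $\sigma=\alpha_1-a_s$, and observe that the length $m_1+m_2-\sigma+1$ never needs truncation precisely because of the lower bound $a_s\geq\max(0,\alpha_1-i-j_{1,t})$ built into $A$. That last verification is the one genuinely non-mechanical point, and you handle it correctly in all four cases, including the consistency of the branches on their overlaps. One remark: in the third branch your computation yields $N_s=j_{1,t}+1$, whereas the statement reads $j_{1,s}+1$; your value is the right one (when $m_2=\min(j_{1,t},\alpha_1-a_s)=j_{1,t}$ the count can only involve $j_{1,t}$, and $j_{1,s}$ never enters the interval for $a_1$), so the printed ``$j_{1s}+1$'' is a typo in the paper rather than a gap in your proof — you should flag the correction explicitly instead of asserting that your four cases ``are exactly'' the printed ones. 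With that caveat, the proof is complete and matches the numerics of the paper's examples (e.g.\ Example~1, where $a_s=a_3=0$ gives $N_s=i+1=2$ and $|A|=2$).
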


\begin{theorem}
\label{Frank}
Let $\T$ be a trifocal Grassmann tensor of dimension $n_1\times n_2\times n_3,$ with profile $(\alpha_1,\alpha_2,\alpha_3),$ satisfying Assumption \ref{g.a.}. Let  $i$ and $j_{u,v}$ for $u,v\in\{1,2,3\},$ $u\neq v,$ be defined as in \brref{numrel2} and \brref{numrel3}. Then the rank of the first flattening $\T_1$ is:
\begin{equation}
\rk{\T_1}= n_1 - \sum_{A} \bincof{i}{a_1} \bincof{j_{1,2}}{a_2} \bincof{j_{1,3}}{a_3}
\end{equation}
where $A= \{ (a_1,a_2, a_3) |\, a_u\in \mathbb{Z}_{\geq 0}, \sum_u a_u = \alpha_1, a_1 \le i, \max(0,\alpha_1 -i-j_{1,t})  \leq a_s\leq j_{1,s} -\alpha_s-1, a_t\le j_{1,t} \}.$
\end{theorem}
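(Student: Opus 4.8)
The plan is to transport the computation to the canonical form $\T^c$, invoke Lemma~\ref{lemmanminusv} to replace the rank by a count of zero rows, and then pin down exactly which rows of $\T^c_1$ vanish using the linear system \brref{systemabc} that already appears in the proof of Lemma~\ref{lemmajrsconditions}. \emph{Step 1 (reduce to the canonical form).} I would first observe that passing from $\T$ to $\T^c$ amounts to a right action of $GL(k+1)$ on \brref{matricetrasposta_r} — which multiplies every entry by one and the same nonzero constant, so the tensor is unchanged up to scalar — together with left actions by matrices $H_j\in GL(h_j+1)$ on the blocks $P_j^T$. By Remark~\ref{invfrk2} each $H_j$ induces an invertible linear map on $V_j$, and by Remark~\ref{invfrk} the F-rank is invariant under such maps; hence $\rk{\T_1}=\rk{\T^c_1}$. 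By Lemma~\ref{lemmanminusv} this equals $n_1-v_1$, with $v_1$ the number of zero rows of $\T^c_1$, so everything reduces to showing $v_1=\sum_A\bincof{i}{a_1}\bincof{j_{1,2}}{a_2}\bincof{j_{1,3}}{a_3}$.

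\emph{Step 2 (partition the rows).} As recalled in Section~\ref{canformsec}, a row of $\T^c_1$ is fixed by a choice of $\alpha_1$ columns of the first block of \brref{canmat}; I would record this choice by the triple $(a_1,a_2,a_3)$ counting how many columns are taken from $I_i$, $I_{j_{1,2}}$, $I_{j_{1,3}}$, together with the specific columns selected. Call $(a_1,a_2,a_3)$ \emph{admissible} when $a_u\ge 0$, $a_1+a_2+a_3=\alpha_1$, $a_1\le i$, $a_2\le j_{1,2}$, $a_3\le j_{1,3}$. An admissible triple accounts for $\bincof{i}{a_1}\bincof{j_{1,2}}{a_2}\bincof{j_{1,3}}{a_3}$ rows, and by the Vandermonde convolution these numbers sum over all admissible triples to $\bincof{i+j_{1,2}+j_{1,3}}{\alpha_1}=\bincof{h_1+1}{\alpha_1}=n_1$, using $i+j_{1,2}+j_{1,3}=h_1+1$ from \brref{numrel2}--\brref{numrel3}. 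Thus the admissible triples partition the rows of $\T^c_1$, and it remains to determine which of them carry a zero row and to match that collection with $A$.

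\emph{Step 3 (which rows vanish) and the main obstacle.} For a fixed admissible triple, the attached row of $\T^c_1$ contains a non-zero entry iff the system \brref{systemabc}, under \brref{condonbc}, admits a non-negative integer solution. Its second and third equations force $b_2=j_{1,2}-a_2$ and $c_2=j_{1,3}-a_3$, and compatibility with $b_1+b_2+b_3=\alpha_2$, $c_1+c_2+c_3=\alpha_3$ (all entries $\ge 0$) requires $a_2\ge j_{1,2}-\alpha_2$ and $a_3\ge j_{1,3}-\alpha_3$; so any admissible triple with $a_2\le j_{1,2}-\alpha_2-1$ or $a_3\le j_{1,3}-\alpha_3-1$ yields an identically zero row. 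For the converse I would reuse the case-by-case construction of an admissible $c_3$ from the second half of the proof of Lemma~\ref{lemmajrsconditions}: when both $a_2\ge j_{1,2}-\alpha_2$ and $a_3\ge j_{1,3}-\alpha_3$ hold for the triple at hand, that construction produces a solution, so the row is non-zero. Now if $\rk{\T^c_1}<n_1$, Lemma~\ref{lemmajrsconditions} says exactly one of the conditions \brref{jrineq} holds, say $j_{1,s}-\alpha_s-1\ge\max(0,\alpha_1-i-j_{1,t})$ with $\{s,t\}=\{2,3\}$; the failure of the other condition forces every admissible triple to satisfy $a_t\ge j_{1,t}-\alpha_t$ (otherwise either $a_t<0$ or $a_1+a_s>i+j_{1,s}$, both impossible), so the zero rows are precisely the admissible triples with $a_s\le j_{1,s}-\alpha_s-1$, i.e. the set $A$ — the inequality $\max(0,\alpha_1-i-j_{1,t})\le a_s$ and the bound $a_t\le j_{1,t}$ in the definition of $A$ being automatic for admissible triples. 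Summing the binomial products over $A$ gives $v_1$; if instead $\rk{\T^c_1}=n_1$ then $A=\emptyset$ and the formula is trivial. The statements for $\rk{\T_2}$ and $\rk{\T_3}$ follow by the cyclic symmetry of the construction. I expect the real work to be the converse direction: checking that the case analysis imported from Lemma~\ref{lemmajrsconditions} depends only on the two local inequalities $a_2\ge j_{1,2}-\alpha_2$, $a_3\ge j_{1,3}-\alpha_3$ for the individual triple and not on the global hypotheses \brref{max1}--\brref{max2} under which that lemma was phrased, so one must reread its Cases 2, 3 and the sub-cases of Case 4 with this weaker input.
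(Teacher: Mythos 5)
Your proposal takes essentially the same route as the paper's proof: reduce to the canonical form via Remarks \ref{invfrk} and \ref{invfrk2}, use Lemma \ref{lemmanminusv} to turn the rank into a count of zero rows of $\T^c_1$, and identify the vanishing rows with the triples in $A$ through the solvability of the column-count system \brref{systemabc}, which is exactly how the paper combines Lemma \ref{lemmajrsconditions} with Claim \ref{remsetA}. The one step you defer, namely that a row with $a_2\ge j_{1,2}-\alpha_2$ and $a_3\ge j_{1,3}-\alpha_3$ is nonzero without the global hypotheses \brref{max1}--\brref{max2}, does hold and can be settled directly rather than by rereading the lemma's case analysis: once $b_2=j_{1,2}-a_2$ and $c_2=j_{1,3}-a_3$ are forced, setting $B=\alpha_2-j_{1,2}+a_2\ge 0$ and $C=\alpha_3-j_{1,3}+a_3\ge 0$ one has $B+C=(i-a_1)+j_{2,3}$, and any integer $b_1$ with $\max(0,B-j_{2,3})\le b_1\le\min(i-a_1,B)$ (a nonempty range precisely because $B,C,i-a_1\ge 0$) yields an admissible solution of \brref{systemabc}, so the row is indeed nonzero.
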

\begin{proof}
Let $\T^c$ be the canonical form of $\T$ and $\T^c_1$ be its first flattening matrix. Remark \ref{invfrk2} shows that  F-$\rk{\T^c_1} =$ F-$\rk{\T_1},$ therefore from now on we will focus on F-$\rk{\T^c_1}.$ 
From Lemma \ref{lemmanminusv} the rank of the flattening matrix $\T_1^c$ is known if the number $v_1$ of zero rows of $\T_1^c$ is known.  Assume $j_{1,s}- \alpha_s - 1 \geq \max(0,\alpha_1 -i-j_{1,t}) $ and let $A$ be the corresponding set defined in Claim \ref{remsetA}, which, under our last assumption, is non empty. As noted above, choosing a row of $\T^c_1$ is equivalent to choosing $a_1$ columns from $I_i,$ $a_2$ columns from $I_{j_{1,2}},$ $a_3$ columns from $I_{j_{13}},$ appropriately completing them with zero vectors to obtain entire columns of \brref{canmat},  where $a_i\geq 0,$ $a_1+a_2+a_3=\alpha_1,$ $a_1\leq i,$ $a_2 \leq j_{1,2},$ and $a_3 \leq j_{1,3}.$
From Lemma \ref{lemmajrsconditions} and Claim \ref{remsetA} it follows that zero rows in $\T^c_1$ are exactly all rows that correspond to triplets of nonnegative integers $(a_1,a_2,a_3 )\in A,$ hence $$\rk{\T^c_1} = n_1 - \sum_{A} \bincof{i}{a_1} \bincof{j_{1,t}}{a_t} \bincof{j_{1,s}}{a_s}.$$ 

If $j_{1,2}- \alpha_2 - 1 < \max(0,\alpha_1 -i-j_{1,3})$ and $j_{1,3}- \alpha_3 - 1 < \max(0,\alpha_1 -i-j_{1,2})$ then $A$ is the empty set and thus $v_1 = 0,$ and F-$\rk{\T^c_1}$ is maximum, i.e. F-$\rk{\T_1} = n_1.$
\end{proof}
\begin{remark}
    While Theorem \ref{Frank} gives the result for the first flattening, one can easily obtain the rank of the second and third flattening matrices by simply switching the order of the views and proceeding accordingly.
\end{remark}
\begin{Cor}
    Let $\T$ be a trifocal Grassmann tensor of dimension $n_1\times n_2\times n_3,$ with profile $(\alpha_1,\alpha_2,\alpha_3),$ satisfying Assumption \ref{g.a.}. Then for at least one $r \in \{1,2,3\}$ it is $\rk{\T_r} =n_r.$
\end{Cor}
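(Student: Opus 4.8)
My plan is to reduce the corollary to a short combinatorial impossibility. By Remark \ref{invfrk2} and Lemma \ref{lemmanminusv} one has $\rk{\T_r}=\rk{\T^c_r}=n_r-v_r$, so $\rk{\T_r}<n_r$ exactly when the flattening $\T^c_r$ has a zero row; by Lemma \ref{lemmajrsconditions} this happens if and only if one of the two inequalities in \brref{jrineq} holds for the pair $\{s,t\}=\{1,2,3\}\setminus\{r\}$. The goal is then to prove that these three conditions, one for each $r=1,2,3$, cannot hold simultaneously, which forces at least one flattening to be of maximal rank.

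First I would put \brref{jrineq} into a more transparent form. Write $J_a:=k-h_a$ for $a=1,2,3$, so that $j_{r,s}=J_t$ whenever $\{r,s,t\}=\{1,2,3\}$, and record the identity $i+J_1+J_2+J_3=k+1=\alpha_1+\alpha_2+\alpha_3$, which is immediate from \brref{numrel2}--\brref{numrel3}. Using that $\max(0,y)\le x$ means $x\ge 0$ and $x\ge y$, together with this identity (to rewrite $i+j_{r,s}+j_{r,t}=k+1-j_{s,t}$), one checks that the inequality $j_{r,s}-\alpha_s-1\ge\max(0,\alpha_r-i-j_{r,t})$ is equivalent to the conjunction
\[
J_t\ge\alpha_s+1 \qquad\text{and}\qquad J_r\le\alpha_t-1.
\]
Carrying this out for both inequalities in \brref{jrineq} with $r=1$, and then for $r=2,3$ by the cyclic relabeling of the views used repeatedly in the paper, I would obtain: $\rk{\T_1}<n_1$ iff $(J_2\ge\alpha_3+1 \text{ and } J_1\le\alpha_2-1)$ or $(J_3\ge\alpha_2+1 \text{ and } J_1\le\alpha_3-1)$, and cyclically for $r=2$ and $r=3$. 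Call these six disjuncts $D_r^a$ and $D_r^b$, in the order displayed.

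Next I would argue by contradiction. If $\rk{\T_r}<n_r$ for every $r$, then for each index $r$ one of $D_r^a,D_r^b$ holds, and I may pick one such disjunct per index. Now observe that $D_1^a$ is incompatible with $D_2^a$ (they would force $\alpha_3+1\le J_2\le\alpha_3-1$) and with $D_3^a$ (forcing $\alpha_2+1\le J_1\le\alpha_2-1$); hence a selection containing $D_1^a$ must contain both $D_2^b$ and $D_3^b$, but those two are themselves incompatible (forcing $\alpha_1+1\le J_2\le\alpha_1-1$). Symmetrically, $D_1^b$ is incompatible with each of $D_2^b$ and $D_3^b$, so a selection containing $D_1^b$ must contain both $D_2^a$ and $D_3^a$, which are incompatible (forcing $\alpha_1+1\le J_3\le\alpha_1-1$). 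Since every selection contains $D_1^a$ or $D_1^b$, every case is contradictory, so the three flattenings cannot all fail to be of maximal rank.

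The argument is short, and I do not expect a serious obstacle. The one place where care is needed — and which I would double-check carefully — is the bookkeeping in the second step: identifying the correct ``third'' index in $j_{r,s}=J_t$ and tracking it through both inequalities of \brref{jrineq} and through the cyclic relabelings, so that the six disjuncts come out exactly right. Once that is done, the contradiction in the last step reads off at once from the strict inequalities, and no hypothesis beyond Assumption \ref{g.a.} (already required by Lemma \ref{lemmajrsconditions}) enters.
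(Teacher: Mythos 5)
Your proof is correct and follows essentially the same route as the paper: assume all three flattenings drop rank, use Lemma \ref{lemmajrsconditions} to extract one condition for each $r\in\{1,2,3\}$, and derive a contradiction from pairwise incompatibilities among the six conditions. Your rewriting of each condition as the conjunction $J_t\ge\alpha_s+1$ and $J_r\le\alpha_t-1$ (via $i+J_1+J_2+J_3=\alpha_1+\alpha_2+\alpha_3$) is a correct equivalent of the paper's direct case analysis on the sign of $\alpha_r-i-j_{r,t}$, and the bookkeeping of the six disjuncts checks out.
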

\begin{proof}
    Suppose that $\rk{\T_r} < n_r$ for all $r = 1,2,3.$ From Lemma \ref{lemmajrsconditions} it follows that three of the six conditions 
    \begin{equation}
        (r,s,t) :j_{r,s} - \alpha_s -1 \geq \max(0,\alpha_r - i - j_{r,t}) ,
    \end{equation}
    where $r,s,t \in \{1,2,3\},$ must hold, one with $r=1,$ one with $r=2,$  and one with $r =3.$
    First observe that, for fixed values of $r,s,t,$ if $(r,s,t)$ holds then neither $(s,t,r)$ nor $(t,r,s)$ can hold. Indeed assume $(r,s,t)$ holds and $\alpha_r - i - j_{r,t} = j_{r,s} - \alpha_s + j_{s,t} - \alpha_t <0.$ Then $\max(0,\alpha_r - i - j_{r,t}) =0,$ and $j_{r,s} -\alpha_s -1 \geq 0,$ which in turn gives $j_{s,t} -\alpha_t <0,$ and thus $(s,t,r)$ can not hold. Assume instead that $(r,s,t)$ holds and $\alpha_r - i - j_{r,t} = j_{r,s} - \alpha_s + j_{s,t} - \alpha_t \geq 0.$ Then $(r,s,t)$ gives $j_{s,t}-\alpha_t \leq -1,$ and thus $(s,t,r)$ is not possible in this case either. 
    Further observe that $(r,s,t)$ implies $j_{r,s} - \alpha_s \geq 1$ and if $(t,r,s)$ held it would be $j_{t,r} - \alpha_r - 1 \geq j_{t,r} - \alpha_r + j_{r,s} - \alpha_s$ and thus $j_{r,s} -\alpha_s \leq -1$ which is impossible. Hence if $(r,s,t)$ holds neither $(s,t,r)$ nor $(t,r,s)$ can hold.
    Now assume $(r,s,t)$ is one of the six conditions that hold. From the above observation it follows that $(s,r,t)$ must hold. But the same observation then implies that $(t,s,r)$ must hold, which is incompatible with $(s,r,t),$ again from the above observation.
    \end{proof}

\begin{remark}
Let $\T$ be a trifocal Grassmann tensor of dimension $n_1\times n_2\times n_3,$ with profile $(\alpha_1,\alpha_2,\alpha_3),$ satisfying Assumption \ref{g.a.}. As its canonical form $\T^c$ is obtained via successive invertible transformations in the ambient space and in the views, it is $\rk{\T} = \rk{\T^c}$.
\end{remark}

\begin{remark} 
\label{remfindvanishingrows} Proposition \ref{Frank} and Claim \ref{remsetA} show how to count the number of zero rows in a flattening matrix $\T^c_r$ of a tensor $\T^c$ in canonical form. Here we describe a procedure that identifies exactly which rows of the flattening matrix vanish. For simplicity we will set $r=1,$ as similar arguments work for $r=2,3.$ Let ${\mathcal T}^c_1$ be a flattening matrix of a tensor $\T^c$ as above, and let $\underline{a}=(a_1, a_2, a_3) \in A$, where $A$ is as in Claim \ref{remsetA}.  Recall that the rows of ${\mathcal T}^c_1$ are indexed by the multi-indices $I$ with respect to the lexicographic order.

First, choose any $a_1$ columns among the first $i$ columns of the first block of \brref{canmat}, $a_2$ columns from the next $j_{12}$ columns, and $a_3$ columns from the last $j_{13}$ columns. Each such choice produces entries ${\mathcal T}^c_{I_{\underline a}, J,K}$ of the tensor ${\mathcal T}^c$, where $I_{\underline a}$ is any multi-index containing the indices of any non-chosen $i-a_1+j_{12}-a_2+j_{13}-a_3=s_1+1$ columns of the canonical form, and $J, K$ are any multi-indices of length, respectively, $s_2+1$ and $s_3+1$, as described above. Therefore, for any triplet $ \underline{a}=(a_1, a_2, a_3)$ the  $\bincof{i}{a_1} \bincof{j_{1,2}}{a_2} \bincof{j_{1,3}}{a_3}$ rows with indices $I_{\underline a}$ of the flattening ${\mathcal T}^c_1$ are zero.
\end{remark}



\section{Core of Grassmann Tensors}

\subsection{Core of Grassmann tensors in canonical form}

Let ${\mathcal T}$ be a trifocal Grassmann tensor and denote by ${\mathcal T}^c$ its canonical form. Results form the previous section allow one to directly find the core of ${\mathcal T}^c.$ This approach is similar to HOSVD (see Section \ref{hosvdgeneral}) but the canonical form of a tensor makes it easier to compute the matrices $U_j$ involved in the process. 

In Section \ref{multilin} we computed the multilinear rank $(r_1,r_2,r_3)$ of ${\mathcal T}^c$. As seen before, it is given by  $r_j = n_j - v_j$ where $v_j$ is the number of zero rows of $\T^c_j$. Moreover, Remark \ref{remfindvanishingrows} gives an effective method to list the zero rows $r_{h_1}, \dots, r_{h_{v_j}}$, in $\T^c_j$. Denote by $r_{k_1}, \dots, r_{k_{r_j}}$, with $r_{k_1} < r_{k_2} < \dots < r_{k_{r_j}}$ the non-zero rows of $\T^c_j$.

As remarked in the proof of Lemma \ref{lemmanminusv}, the columns of $\T^c_j$ are zero or they are elements of the canonical basis $\{\mathbf{e}_1, \dots ,\mathbf{e}_{n_j} \}$ of $\mathbb{C}^{n_j}$, i.e., among the columns of $\T^c_j$ we can find all vectors $\mathbf{e}_{k_t}$ for $t = 1,\dots r_j$. Hence it is straightforward to find an orthonormal basis for the image of each flattening and therefore the matrices $U_j$ quoted in \ref{hosvdgeneral}. Indeed, the matrix $U_j$ is the matrix whose columns are the vectors $\mathbf{e}_{k_1}, \dots, \mathbf{e}_{k_{r_j}}.$ Notice that deleting the zero rows $r_{h_1}, \dots, r_{h_{v_j}}$ from $U_j$ we get the identity matrix. What's more, the multiplication of $U^*_j$ by $\T^c_j$ deletes the zero rows of $\T^c_j$. As a consequence, the core tensor ${\mathcal C}^c$ of ${\mathcal T}^c$ is obtained from ${\mathcal T}^c$ by deleting all zero faces in each of the three directions.

\subsection{Core of Grassmann Tensors in the general case}
\label{core}

Let ${\mathcal T}$ be a trifocal Grassmann tensor and denote by ${\mathcal T}^c$ its canonical form. Recall that ${\mathcal T}^c$ can be obtained from ${\mathcal T}$ via multilinear multiplication, i.e., ${\mathcal T}^c=(V_1, V_2, V_3) \cdot {\mathcal T}$ where $V_i$ are invertible matrices obtained from the matrices $H_i$ and $K$ in Section \ref{canformsec}. More precisely, $V_j= (\bigwedge^{s_j+1}H_j^{-1})^*$ for $j=1,2,3$. As shown in the previous subsection, our construction of the canonical tensor allows us to introduce suitable matrices $U_1, U_2, U_3$ such that the core ${\mathcal C}^c$ of ${\mathcal T}^c$ can be obtained as follows: ${\mathcal C}^c=(U_1^*, U_2^*, U_3^*)\cdot {\mathcal T}^c$. 

In order to find a core ${\mathcal C}$ of ${\mathcal T}$, we proceed as follows. First, we define an invertible matrix $B_j$ of size $r_j \times r_j$ for $j=1,2,3$ as $B_j=E_jD_j^{-1}$ where $D_j$ is the diagonal matrix with the singular values of $V_j^{-1}U_j$ and $E_j$ is the matrix whose columns are the eigenvectors of $(V_j^{-1}U_j)^*(V_j^{-1}U_j)$.

Second, we define the tensor ${\mathcal C}$ as $(B_1^{-1}, B_2^{-1}, B_3^{-1}) \cdot {\mathcal C}^c$. Finally, we introduce matrices $S_j=V_j^{-1}U_jB_j$ for $j=1,2,3$, which are semi-orthogonal.

Third, we verify that ${\mathcal C}$ is a core of ${\mathcal T}$, i.e., ${\mathcal T}=(S_1, S_2, S_3)\cdot {\mathcal C}$ because ${\mathcal C}=(S_1^*, S_2^*, S_3^*)\cdot {\mathcal T}$ and the following diagram commutes:
\begin{equation}
\xymatrix{
{\mathcal T}  \ar[rr]^{(V_1, V_2, V_3)}  & & \ar[dd]^{(U_1^T, U_2^T, U_3^T)} {\mathcal T}^c \\
 & \\
{\mathcal C}  \ar[uu]^{(S_1, S_2, S_3)}  & & {\mathcal C}^c \ar[ll]^{(B^{-1}_1, B^{-1}_2, B^{-1}_3)}
}
\end{equation}

The matrices of the diagram above are computed in the following concrete example.

\subsubsection{Example}\label{coreconcreto} Let us consider $3$ projections from ${\mathbb P}^4$ onto, respectively, ${\mathbb P}^3$, ${\mathbb P}^2$, ${\mathbb P}^2$ having profile $(2,2,1)$ and corresponding to the following $3$ projection matrices $A_1, A_2, A_3$ such that
$$
A_1^T=\left(\begin{array}{cccc}
2 & 0 & 3 & 1 \\
0 & 0 & 1 & 0 \\
0 & 1 & 0 & 0 \\
1 & 0 & 0 & 0 \\
0 & 0 & 0 & 0 
\end{array}
\right), \quad 
A_2^T=\left(\begin{array}{ccc}
-1 & -1 & 1 \\
0 & 1 & 0 \\
0 & 0 & 0 \\
0 & 0 & 0 \\
1 & 0 & 0
\end{array}
\right), \quad 
A_3^T=\left(
\begin{array}{ccc}
0 & 0 & 0 \\
0 & 0 & 0 \\
1 & 0 & 0 \\
-1 & 1 & 0 \\
0 & 0 & 1
\end{array}
\right). 
$$

The associated $6 \times 3 \times 3$ Grassmann tensor ${\mathcal T}$ turns out to be
$$
\left[
\begin{array}{ccc|ccc|ccc}
0 & 0 & 0 & 0 & 0 & 0 & 0 & 0 & 0 \\
0 & 0 & 0 & 0 & 0 & 1 & 0 & 0 & 1 \\
0 & 0 & 0 & 0 & -1 & -4 & 0 & -1 & -4 \\
0 & 0 & 0 & 0 & 0 & 0 & 0 & 0 & -1 \\
0 & 0 & 0 & 0 & 0 & 0 & 0 & 1 & 4 \\
1 & 0 & 1 & 0 & 0 & 2 & 0 & 0 & 2 \\
\end{array}
\right]
$$

The matrices $(V_1, V_2, V_3)$ are given by
$$
V_1=
\left( 
\begin{array}{cccccc}
0 & 2 & 0& 0& 0 &-1 \\
2 & 0 & 0 & -3 & -1 & 0 \\
0 & -3 & -1& 0& 0 &0 \\
0 & 0 & 0& -1& 0 &0 \\
0 & -1 & 0& 0& 0 &0 \\
-1 & 0 & 0& 0& 0 &0 \\    
\end{array}
\right), \quad
$$
$$
V_2=
\left(
\begin{array}{ccc}
-1 & - 1& 1 \\
0 & 1 & 0 \\
1 & 0 & 0
\end{array}
\right), \quad 
V_3=
\left(
\begin{array}{ccc}
1 & 0 & 0 \\
0 & 1 & 0 \\
0 & -1 & 1
\end{array}
\right).
$$
The matrices $(U_1, U_2, U_3)$ are given by $(U_1, I_3, I_3)$ where
$$
U_1=
\left(
\begin{array}{ccccc}
1 & 0 & 0 &  0 & 0  \\
0 & 1 & 0 &  0 & 0  \\
0 & 0 & 1 &  0 & 0  \\
0 & 0 & 0 &  1 & 0  \\
0 & 0 & 0 &  0 & 1  \\
0 & 0 & 0 &  0 & 0  \\
\end{array}
\right)
$$

The tensor ${\mathcal T}^c$ has flattening ${\mathcal T}^c_3$ given by
$$
\left[
\begin{array}{ccc|ccc|ccc}
0 & 0 & -1 & 0 & 0 & 0 & 0 & 0 & 0 \\
0 & 0 & 0 & 0 & 0 & 0 & 0 & -1 & 0 \\
0 & 0 & 0 & 0 & 1 & 0 & 0 & 0 & 0 \\
0 & 0 & 0 & 0 & 0 & 0 & 1 & 0 & 0 \\
0 & 0 & 0 & -1 & 0 & 0 & 0 & 0 & 0 \\
0 & 0 & 0 & 0 & 0 & 0 & 0 & 0 & 0 \\
\end{array}
\right]
$$

Removing the last row in ${\mathcal T^c_3}$ gives the core ${\mathcal C}^c$ that is a $5 \times 3 \times 3$ tensor. For the convenience of the reader, we switch to another notation in order to describe ${\mathcal C}^c$ and ${\mathcal C}$ more clearly. Indeed, if we introduce canonical bases $\{a_i\}_{i=1}^5$, $\{b_j\}_{j=1}^3$, $\{c_k\}_{k=1}^3$ of the corresponding vector spaces, ${\mathcal C}^c$ is a sum of indecomposables as follows:
$$
{\mathcal C}^c= -a_1 \otimes b_3 \otimes c_1 + (a_3 \otimes b_2 - a_5 \otimes b_1) \otimes c_2 - (a_2 \otimes b_2 - a_4 \otimes b_1) \otimes c_3.
$$

The multilinear multiplication $(B_1^{-1}, B_2^{-1}, B_3^{-1})\cdot {\mathcal C}^c$ gives the core ${\mathcal C}$ of ${\mathcal T}$.
The matrices $(B_1^{-1}, B_2^{-1}, B_3^{-1})$ are given by
$$
B_1^{-1}=\left(
\begin{array}{ccccc}
0 & \frac{\sqrt{17}-\sqrt{13}}{2} & \frac{\sqrt{17}-\sqrt{13}}{2} & 0 & 0 \\
0 & \alpha & \beta & 0 & 0 \\
\frac{3}{2} &  0 & 0 & \frac{-13-\sqrt{221}}{13} & \frac{-13+ \sqrt{221}}{13} \\
\frac{\sqrt{13}+3}{2} & 0 & 0 & \gamma & \delta \\
0 & 0 & 0 & \frac{\sqrt{13}+\sqrt{17}}{2} & \frac{\sqrt{13}+\sqrt{17}}{2}
\end{array}
\right),
$$
where $$
\alpha=\frac{117\sqrt{3}-117\sqrt{3}-27\sqrt{39}+507}{676}, \quad \beta= \frac{117\sqrt{3}+117\sqrt{3}-27\sqrt{39}-507}{676}, 
$$
$$ \gamma=\frac{39\sqrt{13}+39\sqrt{17}+9\sqrt{221}+117}{52}, \quad \delta = \frac{39\sqrt{13}-39\sqrt{17}-9\sqrt{221}+117}{52},
$$
and 
$$
B_2^{-1}= \left(
\begin{array}{ccc}
\frac{\sqrt{2}-\sqrt{6}}{4} & \frac{\sqrt{6}}{2} & \frac{\sqrt{6}}{2} \\
0 & -\sqrt{3} & \sqrt{3} \\
\frac{\sqrt{2}+\sqrt{6}}{4} & \frac{3\sqrt{2}+2\sqrt{6}}{2}  & \frac{3\sqrt{2}+2\sqrt{6}}{2}
\end{array}
\right), \quad
B_3^{-1}=\left( 
\begin{array}{ccc}
0 & \frac{9\sqrt{5}-25}{22} & \frac{15-\sqrt{5}}{11} \\
1 & 0 & 0 \\
0 & \frac{8\sqrt{5}-10}{11} & \frac{8 \sqrt{5}-10}{11}
\end{array}
\right).
$$

If we denote by $K^i_{j_i}$ the $j_i$-th column of $B^{-1}_i$ for $1 \leq i \leq 3$, $1 \leq j_1 \leq 5$, $1 \leq j_2 \leq 3$ and $1 \leq j_3 \leq 3$, the core of ${\mathcal T}$ can be written as 
 
$$
-K^1_1 \otimes K^2_3 \otimes K^3_1 + (K^1_3 \otimes K^2_2 - K^1_5 \otimes K^2_1) \otimes K^3_2- (K^1_2 \otimes K^2_2 - K^1_4 \otimes K^2_1) \otimes K^3_3.
$$

\section{Examples}\label{examples}

In this section we provide seven different examples of three projections $P_j:\Pin{k}\mathrel{-\,}\rightarrow \Pin{h_j}$ $j=1,2,3,$ leading to trifocal Grassman tensors of dimension $n_1 \times n_2 \times n_3,$ with profile $(\alpha_1, \alpha_2, \alpha_3),$ whose multilinear rank is explicitly computed following Lemma \ref{lemmanminusv}, Lemma \ref{lemmajrsconditions}, and Proposition \ref{Frank}. In the first example we also explicitly identify the zero rows responsible for the drop in rank of the first flattening, and we also provide explicit matrices needed for the calculation of the core. Recall that $k$ is determined by the profile, i.e., $k=\alpha_1+\alpha_2+\alpha_3-1,$ $i$ and $j_{rs}$ are defined in \brref{numrel2} and \brref{numrel3}, and $m_1,m_2,$ $A,$ $|A|,$ and $N$ are defined in Claim \ref{remsetA}.\\

\begin{landscape}
\begin{tabular}{|c|c|c|c|c|c|c|c|}
\hline
 & {\it Example 1} &  {\it Example 2} & {\it Example 3} & {\it Example 4} & {\it Example 5}& {\it Example 6}& {\it Example 7}\\
\hline
$k$ & $7$ & $6$ & $9$ & $5$ & $8$ & $12$ & $9$\\\hline
$(h_1, h_2, h_3)$ & $(6,4,4)$ & $(5,4,3)$ & $(8,6,4)$ & $(2,4,4)$ & $(5,5,5)$ & $(7,8,8)$& $(3,8,8)$\\
\hline
$(\alpha_1, \alpha_2, \alpha_3)$ & $(3,3,2)$ & $(3,2,2)$ & $(4,3,3)$ & $(2,2,2)$ & $(1,4,4)$ & $(1,6,6)$ & $(3,3,4)$\\
\hline
$i$ & $1$ & $1$ & $1$ & $1$ & $0$ & $0$& $2$ \\
\hline
 $(j_{1,2}, j_{1,3}, j_{2,3})$ & $(3,3,1)$ & $(3,2,1)$ & $(5,3,1)$ &  $(1,1,3)$ & $(3,3,3)$ & $(4,4,5)$ & $(1,1,6)$\\
 \hline
 $(n_1, n_2, n_3)$&$(35,10,10)$&$(20,10,6)$&$(126,35,10)$& $(3,10,10)$ & $(6,15,15)$& $(8,84,84)$ & $(4,84,126)$\\
 \hline
 F-$\rk{\T}$&$$(31,10,10)$$ &$(19,10,6)$ &$(105,35,10)$ & $(3,9,9)$& $(6,12,12)$ & $(8,50,50)$ & $(4,65,75)$\\
 \hline
\end{tabular}
\end{landscape}

\begin{example} In this case we consider three projections from $\Pin{7}$ to, respectively, $\Pin{6}$, $\Pin{4},$ and $\Pin{4},$ with profile $(3,3,2).$ $\T$ is a tensor of dimension $35 \times 10 \times 10$ and the value of the quantities in \brref{numrel2} and \brref{numrel3} are as in the table above. Notice that Assumption \ref{g.a.} is satisfied as $i = 1$ and hence the construction of $\T^c$ can be performed. The only values of $r,s$ that satisfy one of \brref{jrineq} are $r=1,s=3,$ as $j_{1,3}-(\alpha_3+1) =0.$ Hence $\rk{\T^c_2}$ and $\rk{\T^c_3}$ are both maximal while $\rk{\T^c_1}$ drops. For $(r,s,t) = (1,3,2)$ Claim \ref{remsetA} shows that $A = \{(0,3,0), (1,2,0)\}$. According to Proposition \ref{Frank}, the contribution to the rank deficiency given by the first triplet in $A$ is $\binom{1}{0}\binom{3}{3}\binom{3}{0}=1$ while the contribution of the second triplet is $\binom{1}{1}\binom{3}{2}\binom{3}{0}=3$. Therefore, $\rk{\T_1^c}$ drops by $4,$ and F-$\rk{\T} = (31,10,10).$

Following Remark \ref{remfindvanishingrows} we will now identify the $4$ zero rows of $\T^c_1.$ Notice that the first block of \brref{canmat} is a submatrix of dimension $9 \times 7$ while $\alpha_1 = 3,$ hence the multi-indices of the sets of columns that are not chosen in the calculation of each maximal minor, i.e. the row multi-indices of $\T_1^c,$ have length $4$ are the following, in proper lexicographic order, listed above their corresponding row indices:

$$\begin{tabular}{ccccccccc}
1234 & 1235 & 1236& ...&...&...&3467&3567& 4567\\
1&2&3&...&...&...&33&34&35.\\
\end{tabular}$$

More specifically, the entries of the first row of ${\mathcal T}_1$ are given by ${\mathcal T}_{1234, J, K}$, where $J$ and $K$ are multi-indices as in Section \ref{canformsec}.

In correspondence of the first triplet $(0,3,0)$, as $a_2 =j_{12} = 3,$ we are forced to choose the second, the third and the fourth column of the first block of \brref{canmat} to compute entries of the tensor. These entries ${\mathcal T^c}_{I, J, K}$ correspond to the multi-index $ I=\{1567\},$ i.e. the $20$th row of ${\mathcal T}^c_1$.

On the other hand, the triplet $(1,2,0)$ gives three possible multi-indices of rows. As $i=a_1 =1$ we are forced to choose the first column of the first block of \brref{canmat}. As $j_{12} = 3$ and $a_2 = 2$, we have three possible choices $\{(2,3), (2,4), (3,4)\}$ for two out of the next three columns of the same block. Hence we have, respectively, the three rows $ {\T^c_1}_{4567, J, K}, {\T^c_1}_{3567, J, K},{\T^c_1}_{2567, J, K},$ i.e. rows $30$, $34$, and $35.$

\end{example}

\begin{example}
In this case we consider three projections from $\Pin{6}$ to, respectively, $\Pin{5}$, $\Pin{4},$ and $\Pin{3},$ with profile $(3,2,2).$ $\T$ is a tensor of dimension $20 \times 10 \times 4$ and the value of the quantities in \brref{numrel2} and \brref{numrel3} are as in the table above. Notice that Assumption \ref{g.a.} is satisfied as $i = 1$ and hence the construction of $\T^c$ can be performed. The only values of $r,s$ that satisfy one of \brref{jrineq} are $r=1,s=2,$ as $j_{1,2}-(\alpha_2+1) =0.$ Hence $\rk{\T_2}$ and $\rk{\T_3}$ are both maximal while $\rk{\T_1}$ drops. For $(r,s,t) = (1,2,3)$ Claim \ref{remsetA} shows that $A = \{(1,0,2)\}$. According to Proposition \ref{Frank}, the contribution to the rank deficiency given by the triplet in $A$ is $\binom{1}{1}\binom{3}{0}\binom{2}{2}=1$ therefore $\rk{T^1_c}$ drops by $1$ and F-$\rk{\T} = (19,10,4).$

\end{example}

\begin{example} In this case we consider three projections from $\Pin{9}$ to, respectively, $\Pin{8}$, $\Pin{6},$ and $\Pin{4},$ with profile $(4,3,3).$ $\T$ is a tensor of dimension $126 \times 35 \times 10$ and the value of the quantities in \brref{numrel2} and \brref{numrel3} are as in the table above. Notice that Assumption \ref{g.a.} is satisfied as $i = 1$ and hence the construction of $\T^c$ can be performed.  In this case, only one set of values satisfy one of \brref{jrineq}, $(r,s,t) = (1,2,3)$, as $j_{1,2}-(\alpha_2+1) = 1 \geq \max(0,\alpha_1 - i - j_{1,3}) =0.$
 Hence $\rk{\T_1}$ drops, while both $\rk{\T_2}$  and $\rk{\T_3}$ are maximal. For $(r,s,t) = (1,2,3)$ Claim \ref{remsetA} shows that $A = \{(1,0,3), (0,1,3), (1,1,2)\}$. According to Proposition \ref{Frank}, the contribution to the rank deficiency given by the first triplet in $A$ is $\binom{1}{1}\binom{5}{0}\binom{3}{3}=1;$ the contribution of the second triplet is $\binom{1}{0}\binom{5}{1}\binom{3}{3}=5;$ and the contribution of the third triplet is $\binom{1}{1}\binom{5}{1}\binom{3}{2}=15.$ Therefore $\rk{T^1_c}$ drops by $21.$ 
\end{example}
\begin{example} 
 In this case we consider three projections from $\Pin{5}$ to, respectively, $\Pin{2}$, $\Pin{4},$ and $\Pin{4},$ with profile $(2,2,2).$ $\T$ is a tensor of dimension $3 \times 10 \times 10$ and the value of the quantities in \brref{numrel2} and \brref{numrel3} are as in the table above. Notice that Assumption \ref{g.a.} is satisfied as $i = 1$ and hence the construction of $\T^c$ can be performed. The only values of $r,s$ that satisfy one of \brref{jrineq} are $r=2,s=3,$ as $j_{2,3}-(\alpha_3+1) = 0$ and $r=3,s=2,$ as $j_{2,3}-(\alpha_2+1) = 0.$ Hence $\rk{\T_1}$ is maximal, while both $\rk{\T_2}$ and $\rk{\T_3}$ drop. For $(r,s,t) = (2,3,1)$ Claim \ref{remsetA} shows that $B = \{(1,1,0)\}$. According to Proposition \ref{Frank}, the contribution to the rank deficiency of $\rk{T^2_c}$ is given by the triplet in $B$ which is $\binom{1}{1}\binom{1}{1}\binom{3}{0}=1$. Therefore $\rk{T^2_c}$ drops by $1$. Similarly the contribution to the rank deficiency of $\rk{T^3_c}$ is given by the triplet in $C= \{(1,1,0)\}$ which is $\binom{1}{1}\binom{1}{1}\binom{3}{0}=1$. Therefore $\rk{T^3_c}$ drops by $1$ too and  F-$\rk{\T} = (3,9,9).$ 
 \end{example}
 \begin{example} 
 In this case we consider three projections from $\Pin{8}$ to $\Pin{5}$ with profile $(1,4,4).$ $\T$ is a tensor of dimension $6 \times 15 \times 15$ and the value of the quantities in \brref{numrel2} and \brref{numrel3} are as in the table above. Notice that Assumption \ref{g.a.} is satisfied as $i = 0$ and hence the construction of $\T^c$ can be performed. The two sets of values of $r,s$ that satisfy one of \brref{jrineq} are $r=2,s=1,$ as $j_{2,1}-(\alpha_1+1) = 1 \geq \max(0,\alpha_2 - i - j_{2,3})=0 $ and $r=3,s=1,$ as $j_{3,1}-(\alpha_1+1) = 1 \geq \max(0,\alpha_1 - i - j_{3,2})=0 .$ Hence $\rk{\T_1}$ is maximal, while both $\rk{\T_2}$ and $\rk{\T_3}$ drop. Proceeding as in previous examples in this section one gets  F-$\rk{\T} = (6,12,12).$ 
 \end{example}
 \begin{example} 
 In this case we consider three projections from $\Pin{12}$ to, respectively, $\Pin{7}$, $\Pin{8},$ and $\Pin{8},$ with profile $(1,6,6).$ $\T$ is a tensor of dimension $8 \times 84 \times 84$ and the value of the quantities in \brref{numrel2} and \brref{numrel3} are as in the table above. Notice that Assumption \ref{g.a.} is satisfied as $i = 0$ and hence the construction of $\T^c$ can be performed. The only values of $r,s$ that satisfy one of \brref{jrineq} are $r=2,s=1$ and $r=3,s=1,$as we have the strict inequalities: $j_{2,1}-(\alpha_1+1) = 2 > \max(0,\alpha_2 - i - j_{2,3})=1 $ and $j_{3,1}-(\alpha_1+1) = 2 > \max(0,\alpha_1 - i - j_{3,2})=1 .$ Hence $\rk{\T_1}$ is maximal, while both $\rk{\T_2}$ and $\rk{\T_3}$ drop. Proceeding as in previous examples in this section one gets F-$\rk{\T} = (8,50,50).$ 
 \end{example}
 \begin{example} 
 In this last case we have an example of tensor with a relatively small core. We consider three projections from $\Pin{9}$ to $\Pin{3}$, $\Pin{8},$ and $\Pin{8},$  with profile $(3,4,4).$ $\T$ is a tensor of dimension $4 \times 84 \times 126$, $i = 1.$ Proceeding as in previous examples in this section one gets F-$\rk{\T} = (4,65,75).$ 
 \end{example}

\end{document}